\newcommand{\catC}{\mathcal{C}}
\newcommand{\catX}{\mathcal{X}}
\newcommand{\catV}{\mathcal{V}}
\newcommand{\Qnd}{\mathsf{Qnd}}
\newcommand{\Qnds}{\mathsf{SymQnd}}
\newcommand{\Qndsa}{\mathsf{AbSymQnd}}
\newcommand{\Eq}{\mathsf{Eq}}
\newcommand{\lhdi}{\lhd^{-1}}
\newtheorem{theorem}{Theorem}[section]
\newtheorem{lemma}[theorem]{Lemma}
\newtheorem{proposition}[theorem]{Proposition}
\newtheorem{corollary}[theorem]{Corollary}
\theoremstyle{definition}
\newtheorem{definition}[theorem]{Definition}
\theoremstyle{remark}
\newtheorem{remark}[theorem]{Remark}
\def\cartesien{%
    \ar@{-}[]+R+<6pt,-1pt>;[]+RD+<6pt,-6pt>%
    \ar@{-}[]+D+<1pt,-6pt>;[]+RD+<6pt,-6pt>%
  }
\begin{document}
  \title{A characterization of central extensions in the variety of quandles}
	  \author{Val\'erian Even}
	  \address{Institut de Recherche en Math\'ematique et Physique, \\  Universit\'e catholique de Louvain, \\ Chemin du Cyclotron 2, 1348 Louvain-la-Neuve, Belgium.}
	  \author{Marino Gran}
	  \author{Andrea Montoli}
	  \address{Departamento de Matem\'atica \\
	  	Universidade de Coimbra \\
	  	Apartado 3008, 3001-454 Coimbra, Portugal}

\maketitle

\begin{abstract}
The category of symmetric quandles is a Mal'tsev variety whose
subvariety of abelian symmetric quandles is the category of
abelian algebras. We give an algebraic description of the quandle
extensions that are central for the adjunction between the variety
of quandles and its subvariety of abelian symmetric quandles.
\end{abstract}

\section{Introduction}

A \emph{quandle} \cite{Joyce} is a set $A$ equipped with two
binary operations $\lhd$ and $\lhdi$ such that the following
identities hold (for all $a,\ b,\ c \in A$):
\begin{itemize}
\item[(A1)] $a \lhd a = a = a \lhdi a$ (idempotency); \item[(A2)]
$(a \lhd b) \lhdi b = a = (a \lhdi b) \lhd b$ (right
invertibility); \item[(A3)] $(a \lhd b) \lhd c = (a \lhd c) \lhd
(b \lhd c)$ and $(a \lhdi b) \lhdi c = (a \lhdi c) \lhdi (b \lhdi
c)$ (self-distributivity).
\end{itemize}

This structure is of interest in knot theory, since the three
axioms above correspond to the Reidemeister moves on oriented link
diagrams. From a purely algebraic viewpoint, quandles capture the
properties of group conjugation: given a group $(G, \cdot, 1)$, by
defining the operations $a \lhd b = b \cdot a \cdot b^{-1}$ and $a
\lhdi b = b^{-1} \cdot a \cdot b$ on the underlying  set $G$ one
gets a quandle structure.

Quandles and quandle homomorphisms form a category denoted $\Qnd$.
This category, being a variety in the sense of universal
algebra~\cite{BS}, is an exact category (in the sense of
Barr~\cite{Barr}). The variety $\Qnd$ has some interesting
categorical properties, as recently observed in \cite{EG, EG2,
Bourn}. The present work continues this line of research, by
investigating the properties of the adjunction between the variety
of quandles and its subvariety $\Qndsa$ of abelian symmetric
quandles, in particular from the viewpoint of the categorical
theory of central extensions \cite{JK}.

The variety $\Qndsa$ of abelian symmetric quandles is the
subvariety of $\Qnd$ determined by the two additional identities
$$a \lhd b = b \lhd a$$ and $$(a \lhd b) \lhd (c \lhd d) = (a \lhd
c) \lhd (b \lhd d).$$ $\Qndsa$ is a Mal'tsev variety (actually
even a naturally Mal'tsev one \cite{John}, see Section
\ref{abeliansymm}), and it turns out to be an \emph{admissible}
subvariety of $\Qnd$: this fact guarantees the validity of a
Galois theorem of classification of the corresponding central
extensions (see \cite{J, JK}).

This is particularly interesting by keeping in mind that the
variety $\Qnd$ is not congruence modular, since it contains the
variety of sets as a subvariety. However, the subvariety $\Qndsa$
of abelian symmetric quandles yields an adjunction
\begin{equation}\label{theadjunction}\vcenter{\hbox{\begin{tikzpicture}[scale=0.3]
            \node[] (X) at (-5,0) {$\Qnd$};
            \node[] (Y) at (5,0) {$\Qndsa$};
            \node[line width=4pt] (C) at (0,0) {$\perp$};
            \draw[->] (X) to [bend left=25] node[above]{$I$} (Y);
            \draw[<-] (X) to [bend right=25] node[below]{$H$} (Y);
            \end{tikzpicture}}}
            \end{equation}
that is similar to the classical one
\begin{equation}\label{modular}\vcenter{\hbox{\begin{tikzpicture}[scale=0.3]
            \node[] (X) at (-5,0) {$\catV$};
            \node[] (Y) at (5,0) {$\catV_{ab}$};
            \node[line width=4pt] (C) at (0,0) {$\perp$};
            \draw[->] (X) to [bend left=25] node[above]{$I$} (Y);
            \draw[<-] (X) to [bend right=25] node[below]{$U$} (Y);
            \end{tikzpicture}}}
\end{equation}
where ${\catV}$ is any congruence modular variety and
${\catV_{ab}}$ its subvariety of abelian algebras in the sense of
commutator theory \cite{FMK}. Many interesting results in the
categorical theory of central extensions discovered in the last
years actually concern subvarieties of Mal'tsev varieties (see
\cite{Ever}, for instance, and the references therein). The
example investigated in the present paper is then of a rather
different nature, and will be useful to establish some new
connections between algebraic quandle theory and categorical
algebra.

To explain the main result of this paper more precisely, let us
briefly recall how the categorical notions of trivial extension
and of central extension are defined in any variety ${\catV}$ with
respect to a chosen subvariety $\catX$ of ${\catV}$. A surjective
homomorphism $f \colon A \rightarrow B$ in ${\catV}$ is a
\emph{trivial extension} if the commutative square induced by the
units of the reflection
\[\begin{tikzpicture}[scale=0.45]
\node[] (M) at (-2,2) {$A$}; \node[] (X) at (2,2) {$HI(A)$};
\node[] (M') at (-2,-2) {$B$}; \node[] (X') at (2,-2) {$HI(B)$};
\draw[->] (M) to node[above]{$\eta_A$} (X); \draw[->] (M) to
node[left]{$f$} (M'); \draw[->] (X) to node[right]{$HI(f)$} (X');
\draw[->] (M') to node[below]{$\eta_B$} (X'); \draw (-0.4,1.8) --
(-0.4,0.8) -- (-1.8,0.8);
\end{tikzpicture}\]
is a pullback. A surjective homomorphism $f \colon A \rightarrow
B$ is a \emph{central extension} when there exists a surjective
homomorphism~$p \colon E \rightarrow B$ such that the extension
$\pi_1 \colon E \times_B A \rightarrow E$ in the pullback
\[\begin{tikzpicture}[scale=0.45]
\node[] (M) at (-2,2) {$E \times_B A$}; \node[] (X) at (2,2)
{$A$}; \node[] (M') at (-2,-2) {$E$}; \node[] (X') at (2,-2)
{$B$}; \draw[->] (M) to node[above]{$\pi_2$} (X); \draw[->] (M) to
node[left]{$\pi_1$} (M'); \draw[->] (X) to node[right]{$f$} (X');
\draw[->] (M') to node[below]{$p$} (X'); \draw (-0.2,1.8) --
(-0.2,0.8) -- (-1.8,0.8);
\end{tikzpicture}\]
 of $f$ along $p$ is a trivial extension. In any congruence
modular variety ${\catV}$ the central extensions defined in this
way, relatively to the adjunction \eqref{modular}, are precisely
the surjective homomorphisms $f \colon A \rightarrow B$ whose
kernel congruence $\mathsf{Eq}(f) = \{ (a_1, a_2) \in A \times A
\, \mid \, f(a_1)= f(a_2) \}$ is central in the sense of
commutator theory: $[\mathsf{Eq}(f) , A \times A] = \Delta_A$,
where $\Delta_A$ is the smallest congruence on $A$ (see \cite{G,
JK2}). In the present paper we characterize the central extensions
corresponding to the adjunction \eqref{theadjunction} as those
surjective quandle homomorphisms $f \colon A~\rightarrow~B$ such
that (a condition equivalent to) $[\mathsf{Eq}(f) , A \times A] =
\Delta_A$ holds and, moreover, each fiber $f^{-1}(b)= \{a \in A \,
\mid f(a)=b \}$ is an abelian symmetric quandle, for any $b\in B$
(Theorem \eqref{mainresult}). This latter property implies that
$f\colon A \to B$ is $\Sigma$-special in the terms of~\cite{Bourn}
and thus this work fits in the partial Mal'tsev context,
thoroughly studied in the preprint \cite{Bourn2}, that became
available on the ArXiv a few months after the present paper.

\section{Symmetric quandles and abelian symmetric quandles}\label{abeliansymm}

A quandle $A$ is symmetric if it satisfies the additional
identity:
\begin{equation}
a \lhd b = b \lhd a,
\end{equation}
for all $a,\ b \in A$. We write $\Qnds$ for the corresponding
category of symmetric quandles, which is then a subvariety of the
variety $\Qnd$ of all quandles. Here below we observe that the
category $\Qnds$ is a Mal'tsev variety \cite{Smith}, which will be
shown to be an admissible subcategory of $\Qnd$ for the
categorical theory of central extensions \cite{JK}.

\begin{proposition} \cite{Bourn}
The category $\Qnds$ is a Mal'tsev variety.
\end{proposition}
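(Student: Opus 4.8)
The plan is to exhibit an explicit Mal'tsev term for $\Qnds$, relying on the classical (Mal'tsev) characterization: a variety is a Mal'tsev variety precisely when its theory contains a ternary term $p(x,y,z)$ satisfying the two identities $p(x,y,y)=x$ and $p(x,x,y)=y$ (equivalently, all congruences permute). Since $\Qnds$ is presented by the operations $\lhd$ and $\lhdi$ subject to the axioms (A1)--(A3) together with the symmetry law $a \lhd b = b \lhd a$, it suffices to build such a $p$ out of these two binary operations and to verify the two identities against the axioms.

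Concretely, I would propose the ternary term
\[ p(x,y,z) = (z \lhd x) \lhdi y. \]
Evaluating at $z=y$ gives $p(x,y,y) = (y \lhd x) \lhdi y$; applying the symmetry identity to rewrite $y \lhd x$ as $x \lhd y$, and then right invertibility (A2) in the form $(x \lhd y) \lhdi y = x$, one obtains $p(x,y,y)=x$. Evaluating at $z=x$ gives $p(x,x,y) = (y \lhd x) \lhdi x$, which collapses to $y$ directly by (A2) (with $a=y$, $b=x$), this time without using symmetry. Hence $p$ is a Mal'tsev term, and $\Qnds$ is a Mal'tsev variety.

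The computation is short, so the real content lies in choosing the right combination of $\lhd$ and $\lhdi$: the quandle axioms supply cancellation only ``on the matching side'' — (A2) cancels a $\lhd b$ against a $\lhdi b$ — so a naive candidate such as $(x \lhdi y) \lhd z$ reduces $p(x,x,y)$ to $x \lhd y$ rather than to $y$. The point I would emphasize as the crux is that the symmetry law $a \lhd b = b \lhd a$ is exactly what realigns the identity $p(x,y,y)=x$ so that (A2) becomes applicable; this is where the passage from general quandles (which form neither a Mal'tsev nor even a congruence-modular variety, as recalled in the introduction, since they contain the variety of sets) to symmetric quandles is used in an essential way. It is worth remarking that, for the term above, symmetry is invoked for only one of the two defining identities, which isolates precisely its role in the argument.
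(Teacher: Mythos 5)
Your proof is correct and essentially identical to the paper's: the paper exhibits the Mal'tsev term $p(a,b,c) = (a \lhd c) \lhdi b$, which coincides with your $(z \lhd x) \lhdi y$ up to one application of the symmetry law, so the two terms are equal in the theory of $\Qnds$. The only (cosmetic) difference is that symmetry gets invoked in the identity $p(x,y,y)=x$ for your term rather than in $p(x,x,y)=y$ as in the paper.
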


\begin{proof}
Let $p$ be the ternary term defined by \[{p(a,b,c)} = (a \lhd c)
\lhd^{-1} b.\] We then have the identities
\begin{align*}
p(a,a,b) &= (a \lhd b) \lhdi a = (b \lhd a) \lhdi a = b, \\
p(a,b,b) &= (a \lhd b) \lhdi b = a.
\end{align*}
\end{proof}

Recall that a quandle $A$ is \emph{abelian} \cite{Joyce} if it
satisfies the additional axiom
\[(a \lhd b) \lhd (c \lhd d) = (a \lhd c) \lhd (b \lhd d)\] for all $a,\ b,\ c,\ d \in A$.
Note that this axiom is equivalent to the following one:
\begin{equation}\label{abelian2}
(a \lhd b) \lhdi (c \lhd d) = (a \lhdi c) \lhd (b \lhdi d).
\end{equation}
\begin{remark}
Not all abelian quandles are symmetric. Indeed, recall that a
quandle $A$ is \emph{trivial} if $a\lhd b= a= a \lhdi b$ for all
$a,b \in A$. Any trivial quandle is abelian, but it is not
symmetric (as long as it has at least two elements).

Also, not all symmetric quandles are abelian. The smallest
symmetric quandle which is not abelian is a quandle of order $81$
and is constructed in~\cite{Soublin}.
\end{remark}

Let us write $\Qndsa$ for the category of abelian symmetric
quandles, \linebreak $U \colon \Qndsa \rightarrow \Qnds$ and $V
\colon \Qnds \rightarrow \Qnd$ for the inclusion functors. Since
$\Qndsa$ is a subvariety of $\Qnds$ and $\Qnds$ is a subvariety of
$\Qnd$, both these functors have left adjoints, denoted by $ab
\colon \Qnds \rightarrow \Qndsa$ and ${ sym} \colon \Qnd
\rightarrow \Qnds$, respectively:
\[\vcenter{\hbox{\begin{tikzpicture}[scale=0.6]
            \node[] (X) at (-5,0) {$\Qnd$};
            \node[] (Y) at (0,0) {$\Qnds$};
            \node[] (Z) at (5,0) {$\Qndsa$};
            \node[line width=4pt] (C) at (-2.5,0) {$\perp$};
            \node[line width=4pt] (C) at (2.5,0) {$\perp$};
            \draw[->] (X) to [bend left=25] node[above]{$sym$} (Y);
            \draw[<-] (X) to [bend right=25] node[below]{$V$} (Y);
            \draw[->] (Y) to [bend left=25] node[above]{$ab$} (Z);
            \draw[<-] (Y) to [bend right=25] node[below]{$U$} (Z);
            \end{tikzpicture}}}\]

We are now going to show that abelian symmetric quandles are the
internal Mal'tsev algebras in $\Qnds$.

\begin{definition}
An internal Mal'tsev algebra in a variety ${\catV}$ is an algebra
$A \in {\catV}$ with a homomorphism $p_A \colon A\times A \times A
\to A$ such that $p_A(a,a,b) = b$ and $p_A(a,b,b) = a$.
\end{definition}

Let us write $\mathsf{Mal}({\catV})$ for the category of internal
Mal'tsev algebras in ${\catV}$. In a Mal'tsev category, thus in
particular in the category $\Qnds$, any morphism preserves the
Mal'tsev operation (see Corollary $4.1$ in \cite{Gran}, for
instance): this means that the subcategory $\mathsf{Mal}(\Qnds)$
is full in $\Qnds$. The following observation has been found
independently by {Bourn} \cite{Bourn}:
\begin{theorem}
\[\Qndsa = \mathsf{Mal}(\Qnds).\]
\end{theorem}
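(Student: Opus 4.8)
The plan is to prove the equality of these two full subcategories of $\Qnds$ at the level of objects: since $\mathsf{Mal}(\Qnds)$ is full in $\Qnds$ (as recalled just above) and $\Qndsa$ is a subvariety, any symmetric-quandle homomorphism between objects of either class is automatically a morphism there, so an equality of object-classes will give the equality of categories. Everything will be reduced to the single assertion that, for a symmetric quandle $A$, membership in $\Qndsa$ is equivalent to the ternary term $p(a,b,c) = (a \lhd c)\lhdi b$ of the Mal'tsev structure being a quandle homomorphism $A^3 \to A$.

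For the inclusion $\Qndsa \subseteq \mathsf{Mal}(\Qnds)$, I would take $A$ abelian and check that $p$ is a homomorphism. The cleanest route is to read the two equivalent forms of the abelian axiom as the statements that the operations $\lhd \colon A\times A \to A$ and $\lhdi \colon A\times A \to A$ are themselves homomorphisms of symmetric quandles: the medial identity says exactly that $\lhd$ preserves $\lhd$, while identity \eqref{abelian2} says it preserves $\lhdi$ (and symmetrically for $\lhdi$). Since $p$ is obtained from $\lhd$, $\lhdi$, the projections and the diagonal by composition, it is then a homomorphism; as it already satisfies the Mal'tsev identities (the Proposition above), $A$ lies in $\mathsf{Mal}(\Qnds)$ with $p_A = p$.

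For the converse inclusion $\mathsf{Mal}(\Qnds)\subseteq\Qndsa$, which I expect to be the main obstacle, the key point is to show that any internal Mal'tsev structure $p_A$ on $A$ must coincide with the term $p$. Writing the homomorphism property of $p_A$ as
\[p_A(x_1,x_2,x_3)\lhd p_A(y_1,y_2,y_3) = p_A(x_1\lhd y_1,\ x_2\lhd y_2,\ x_3\lhd y_3),\]
I would first specialise $(y_1,y_2,y_3)=(b,b,b)$ and use (A1) together with $p_A(b,b,b)=b$ to obtain $p_A(a,b,c)\lhd b = p_A(a\lhd b,\ b,\ c\lhd b)$. Then I would compute this right-hand side by the opposite specialisation $(x_1,x_2,x_3)=(a,b,b)$ and $(y_1,y_2,y_3)=(b,b,c)$: here the Mal'tsev identities give $p_A(a,b,b)=a$ and $p_A(b,b,c)=c$, while on the argument side $(a\lhd b,\ b\lhd b,\ b\lhd c)$ becomes $(a\lhd b,\ b,\ c\lhd b)$ precisely because of symmetry $b\lhd c = c\lhd b$. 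This yields $p_A(a\lhd b, b, c\lhd b) = a\lhd c$, hence $p_A(a,b,c)\lhd b = a\lhd c$, and by right invertibility (A2) exactly $p_A(a,b,c) = (a\lhd c)\lhdi b = p(a,b,c)$. The use of symmetry in this step is essential and explains why the statement holds in $\Qnds$ rather than in $\Qnd$.

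Finally, now that the term $p$ is itself a homomorphism, I would extract the abelian axiom by a short idempotency argument: evaluating the homomorphism property of $p$ at the triples $(a_i,\ a_i\lhd c_i,\ c_i)$ collapses each factor to $a_i\lhd c_i$ by (A1), so the left-hand side equals $u := (a_1\lhd c_1)\lhd(a_2\lhd c_2)$, while the right-hand side becomes $\big((a_1\lhd a_2)\lhd(c_1\lhd c_2)\big)\lhdi u$. Comparing through (A1)--(A2) forces $(a_1\lhd c_1)\lhd(a_2\lhd c_2) = (a_1\lhd a_2)\lhd(c_1\lhd c_2)$, which is the medial axiom, so $A\in\Qndsa$. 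Combining the two inclusions gives the claimed equality of categories.
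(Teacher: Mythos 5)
Your proposal is correct, but in the crucial converse direction it follows a genuinely different route from the paper. Where you prove by hand that any internal Mal'tsev structure $p_A$ on a symmetric quandle must coincide with the term $(a\lhd c)\lhdi b$ (specialising the homomorphism property first at $(b,b,b)$, then at $(a,b,b)\lhd(b,b,c)$, with symmetry used exactly where you say it is), the paper simply invokes the general fact, cited from Johnstone's work on naturally Mal'tsev categories, that in a Mal'tsev variety an internal Mal'tsev operation is unique and given by the Mal'tsev term of the theory. Your argument buys self-containedness: it replaces an external citation by a three-line equational computation, and it isolates precisely the role of the symmetry axiom, explaining why the statement lives in $\Qnds$ rather than $\Qnd$. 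The paper's citation buys brevity and places the result in its conceptual context. The two proofs also extract the abelian axiom differently: the paper evaluates the homomorphism property of $p_A$ at triples $(a,b,a)\lhd(x,y,x)$, obtaining identity \eqref{abelian2} directly, while you evaluate at $(a_i, a_i\lhd c_i, c_i)$ and obtain the medial identity; both computations are valid and the two identities are equivalent. In the forward direction your packaging (the abelian axioms say that $\lhd$ and $\lhdi$ are themselves quandle homomorphisms $A\times A\to A$, so the composite $p$ is one) is essentially the paper's computation in disguise, since preservation of $\lhd$ by $\lhd$ is mediality and preservation of $\lhdi$ by $\lhd$ is \eqref{abelian2}. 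One small point you should make explicit: your claim that $\lhdi$ is also a homomorphism requires the $\lhdi$-medial identity $(a\lhdi b)\lhdi(c\lhdi d)=(a\lhdi c)\lhdi(b\lhdi d)$, which is not one of the stated axioms; it does follow from mediality and right invertibility (via the identity $(x\lhdi y)\lhd(z\lhdi w)=(x\lhd z)\lhdi(y\lhd w)$, itself a consequence of mediality and (A2)), but this derivation deserves a line rather than the word ``symmetrically''---alternatively, you could avoid needing $\lhdi$ to be a homomorphism at all, since a map of quandles preserving $\lhd$ automatically preserves $\lhdi$ by right invertibility, so it suffices to check that $p$ preserves $\lhd$, which is exactly the paper's computation.
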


\begin{proof}
Let $A \in \Qndsa$, and let $p_A \colon A \times A \times A \to A$
be the Mal'tsev operation on $A$ defined by ${ p_A(a,b,c)} = (a
\lhd c) \lhdi b$. We have to check that it is a quandle
homomorphism. For any $a, b, c, x, y, z \in A$ we have
\begin{align*}
p_A((a,b,c) \lhd (x,y,z)) &= {p_A(a \lhd x, b \lhd y, c \lhd z)} \\
&= \left( (a \lhd x) \lhd (c \lhd z) \right) \lhdi (b \lhd y) \\
&= \left( (a \lhd c) \lhd (x \lhd z) \right) \lhdi (b \lhd y) \\
&= \left( (a \lhd c) \lhdi b \right) \lhd ((x \lhd z) \lhdi y) \\
&= p_A(a,b,c) \lhd p_A(x,y,z).
\end{align*}
This shows that $A$ belongs to $\mathsf{Mal}(\Qnds)$.

Conversely, when $A \in \mathsf{Mal}(\Qnds)$, the unique internal
Mal'tsev operation on $A$ \cite{John} is necessarily given by (any
of) the Mal'tsev operations of the theory of the variety $\Qnds$.
Accordingly, it is defined by $p_A(a,b,c) = (a \lhd c) \lhdi b$,
and it is such that $p_A (a,b,a) = a \lhdi b$. Moreover, $p_A
\colon {A \times A \times A} \rightarrow A$ preserves the binary
operation $\lhd$, so that the equality $$ p_A((a,b,a) \lhd
(x,y,x)) = p_A(a,b,a) \lhd p_A(x, y,x) $$ gives
$$(a \lhd x) \lhdi (b \lhd y) =  (a \lhdi b) \lhd (x \lhdi y).$$
This is precisely the identity \eqref{abelian2}, and the quandle
$A$ belongs to $\Qndsa$.
\end{proof}

We now recall the definition of two classes of morphisms in
$\Qnd$, first investigated by Bourn, that will be important for
our work:

\begin{definition} \cite{Bourn}
We denote by $\Sigma$ the class of split epimorphisms $f \colon A
\rightarrow B$ with a given section $s \colon B \rightarrow A$
(i.e. $f \circ  s = 1_B$) in the category $\Qnd$ such that the map
\linebreak $s(b) \lhd - \colon f^{-1}(b) \to f^{-1}(b)$ is
surjective, for any $b\in B$.
\end{definition}
In other words, the split epimorphism $f$ with section $s$ is in
$\Sigma$ if, for any $b \in B$ and $a \in  f^{-1}(b)$, there is a
$k_a \in f^{-1}(b)$ such that $s(b) \lhd k_a = a$.
\begin{remark}
This element $k_a$ also depends on $b$, so that one should write
$k_{b, a}$, instead. We shall simply write $k_a$, however, to
simplify the notations.
\end{remark}
Given an internal equivalence relation $(R,r_1, r_2)$ in $\Qnd$ on
$A$, i.e. a congruence on $A$, we write $\delta_R \colon {A}
\rightarrow R$ for the homomorphism defined by $\delta_R (a)=
(a,a)$, for any $a$ in $A$. An equivalence relation $(R,r_1, r_2)$
is said to be a \emph{$\Sigma$-equivalence relation} if the split
epimorphism $r_1 \colon R \rightarrow {A}$ with section $\delta_R
\colon {A} \rightarrow R$ belongs to the class $\Sigma$.

Given a quandle homomorphism $f \colon A \rightarrow B$, we write
$(\Eq (f), f_1, f_2)$ for the kernel pair of $f$, where $f_1
\colon \Eq (f) \rightarrow A$ and $f_2  \colon \Eq (f) \rightarrow
A$ are the canonical projections: in a variety of universal
algebras $\Eq (f)$ is simply the kernel congruence on $A$ defined
by $\Eq(f) = \{ (a_1, a_2) \in A \times A \mid f(a_1)= f(a_2) \}.$
\begin{definition} {\cite{BMMS, BMMS2, Bourn}}
A morphism $f \colon A \rightarrow B$ in $\Qnd$ is
\emph{$\Sigma$-special} if $(\Eq(f), f_1, f_2)$ is a
$\Sigma$-equivalence relation.
\end{definition}
The following result is a direct consequence of Theorem $3.9$ in
\cite{Bourn}, and will be useful later on:
\begin{theorem}
Let $f \colon A \rightarrow B$ be a $\Sigma$-special homomorphism
in $\Qnd$. Then any congruence $R$ on $A$ permutes with $\Eq(f)$
in the sense of the composition of relations:
$$ R \circ \Eq(f) = \Eq(f) \circ R.$$
\end{theorem}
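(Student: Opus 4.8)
The plan is to reduce the equality $R \circ \Eq(f) = \Eq(f) \circ R$ to a single inclusion. Since $R$ and $\Eq(f)$ are both symmetric relations, taking opposites turns any inclusion $R \circ \Eq(f) \subseteq \Eq(f) \circ R$ into its reverse, because $(R \circ \Eq(f))^{op} = \Eq(f)^{op} \circ R^{op} = \Eq(f) \circ R$ and likewise on the other side; so it suffices to establish one containment. Concretely, I would prove that any zig-zag of the form $x \mathrel{R} y$ with $f(y) = f(z)$ can be re-routed as $f(x) = f(w)$ with $w \mathrel{R} z$. Before starting I would unpack the hypothesis: saying that $(\Eq(f), f_1, f_2)$ is a $\Sigma$-equivalence relation means that $(a,a)\lhd -$ is surjective on each fibre $f_1^{-1}(a)$, which, using idempotency, translates into the concrete statement that \emph{whenever $f(a)=f(a')$ there exists $r\in A$ with $f(r)=f(a)$ and $a\lhd r = a'$} (inside each fibre of $f$, any element is a conjugate of any other by an element of the same fibre).

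For the construction, take $x \mathrel{R} y$ with $f(y)=f(z)$. Applying the unpacked $\Sigma$-special condition to the pair $(y,z)$, which lies in a single fibre, yields $r$ with $f(r)=f(y)$ and $y \lhd r = z$. Since $R$ is a congruence, applying the quandle operation $-\lhd r$ to the pair $x \mathrel{R} y$ gives $(x\lhd r) \mathrel{R} (y \lhd r) = z$. The candidate witness is then
\[ w = (x\lhd r)\lhdi z. \]
Applying $-\lhdi z$ to $(x\lhd r)\mathrel{R} z$ and using the idempotency identity $z\lhdi z = z$ (A1) shows $w \mathrel{R} z$, while a direct computation using $f(r)=f(y)=f(z)$ and right invertibility (A2) gives $f(w) = (f(x)\lhd f(z))\lhdi f(z) = f(x)$. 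Hence $x \mathrel{\Eq(f)} w$ and $w \mathrel{R} z$, which is exactly the re-routed zig-zag; the reverse inclusion then comes for free by the opposite-relation argument above.

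I expect the main obstacle to be controlling the $f$-value of the witness. The congruence $R$ lets one transport a related pair along any quandle operation, but conjugating $x$ by $r$ shifts its $f$-class from $f(x)$ to $f(x)\lhd f(y)$, and $\Sigma$-specialness by itself only moves elements \emph{within} a single fibre and so cannot undo this shift. The role of post-composing with $-\lhdi z$ is precisely to cancel this spurious conjugation factor through axiom (A2) while keeping the second coordinate $z$ fixed through axiom (A1); verifying this cancellation together with its compatibility with $R$ is the only delicate point, the rest being formal. (Alternatively, the statement follows at once from Theorem~3.9 in \cite{Bourn}.)
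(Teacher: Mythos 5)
Your proposal is correct, and it takes a genuinely different route from the paper: the paper offers no argument of its own, recording the statement as a direct consequence of Theorem $3.9$ in \cite{Bourn} --- which is exactly your closing parenthetical remark --- whereas you give a self-contained, purely equational proof inside $\Qnd$. Your unpacking of $\Sigma$-specialness (whenever $f(a)=f(a')$ there is $r$ with $f(r)=f(a)$ and $a\lhd r=a'$) is the correct fibrewise reading of the definition, and it is the same concrete form the paper itself extracts when it uses $\Sigma$-specialness later, in the proof of Lemma~\ref{algebraicpullback}. Each step of your re-routing checks out: from $x\mathrel{R}y$ and $y\lhd r=z$ you get $(x\lhd r)\mathrel{R}z$ because the congruence $R$ is compatible with $\lhd$; the witness $w=(x\lhd r)\lhdi z$ satisfies $w\mathrel{R}(z\lhdi z)=z$ by compatibility with $\lhdi$ together with (A1); and $f(w)=(f(x)\lhd f(r))\lhdi f(z)=(f(x)\lhd f(z))\lhdi f(z)=f(x)$ since $f(r)=f(y)=f(z)$, using (A2). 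The reduction to a single inclusion via opposites of symmetric relations is also sound, so the proof is complete. What the paper's citation buys is brevity and an anchoring of the result in Bourn's structural theory of the class $\Sigma$; what your argument buys is independence from that external reference and full transparency about which ingredients --- compatibility of $R$ with both operations, the fibrewise surjectivity encoded in $\Sigma$, and the quandle axioms (A1) and (A2) --- actually force $\Eq(f)$ to permute with every congruence on $A$.
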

\begin{corollary}\label{surjective}
Given a pushout of surjective homomorphisms
\[\begin{tikzpicture}[scale=0.35]
\node[] (M) at (-2,2) {$A$}; \node[] (X) at (2,2) {$B$}; \node[]
(M') at (-2,-2) {$C$}; \node[] (X') at (2,-2) {$D$}; \draw[->] (M)
to node[above]{$f$} (X); \draw[->] (M) to node[left]{$g$} (M');
\draw[->] (X) to node[right]{$h$} (X'); \draw[->] (M') to
node[below]{$l$} (X');
\end{tikzpicture}\]
where $f$ is $\Sigma$-special, {the} induced homomorphism $A
\xrightarrow{(g,f)} C \times_D B$ to the pullback is surjective.
\end{corollary}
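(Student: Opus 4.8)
The plan is to reduce surjectivity of the comparison map to the permutability of two congruences, which is exactly what the preceding Theorem supplies. Write $R = \Eq(g)$ and $S = \Eq(f)$ for the kernel congruences of $g$ and $f$. Since $\Qnd$ is an exact variety and the square is a pushout of surjective (hence regular epi) homomorphisms, I would identify $B \cong A/S$, $C \cong A/R$ and $D \cong A/(R \vee S)$, where $R \vee S$ denotes the join (the smallest congruence containing $R$ and $S$) and $h$, $l$ are the canonical quotient maps. In particular the diagonal $hf = lg \colon A \to D$ is the quotient of $A$ by $R \vee S$, so that $\Eq(hf) = R \vee S$.

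Next I would run an element chase. Fix $(c,b) \in C \times_D B$, i.e. $l(c) = h(b)$; I must produce $a \in A$ with $g(a) = c$ and $f(a) = b$. Using surjectivity of $g$ and of $f$, choose $a_1$ with $g(a_1) = c$ and $a_2$ with $f(a_2) = b$. Commutativity of the square gives $hf(a_1) = lg(a_1) = l(c) = h(b) = hf(a_2)$, so that $(a_1, a_2) \in \Eq(hf) = R \vee S$.

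The key step is to replace the join $R \vee S$ by the relational composite $R \circ S$. Because $f$ is $\Sigma$-special, the preceding Theorem yields $R \circ S = S \circ R$; a composite of two permuting congruences is again a congruence (permutability makes the composite symmetric and transitive, and compatibility with the operations is automatic), and since it contains both $R$ and $S$ it must equal their join. Hence $(a_1, a_2) \in R \vee S = R \circ S$, so there is $a \in A$ with $(a_1, a) \in R$ and $(a, a_2) \in S$. Then $g(a) = g(a_1) = c$ and $f(a) = f(a_2) = b$, which exhibits $(c,b)$ in the image of $(g,f)$ and proves surjectivity.

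The only real obstacle is the passage $R \vee S = R \circ S$: without it the image of the comparison map is merely the (in general smaller) set of pairs related by $R \circ S$, whereas the full pullback is indexed by $R \vee S$. This is precisely where $\Sigma$-specialness enters, through permutability; everything else is the routine exact-category bookkeeping that identifies the pushout with a quotient by a join and its diagonal kernel with $R \vee S$.
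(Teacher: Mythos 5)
Your proof is correct, and it is in substance the same argument the paper invokes by citation: the proof of Lemma 1.7 in \cite{EG} (adapted from \cite{CKP}) rests on exactly the two facts you isolate, namely that the pushout of surjective homomorphisms is the quotient of $A$ by the join $\Eq(g) \vee \Eq(f)$, and that permutability --- supplied by the preceding theorem on $\Sigma$-special morphisms --- turns this join into the relational composite $\Eq(g) \circ \Eq(f)$. You have simply written that relational computation out element-wise, which is a legitimate, self-contained rendering of the same approach.
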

\begin{proof}
The proof is essentially the same as the one given in \cite{EG},
Lemma $1.7$ (which is adapted from \cite{CKP}).
\end{proof}

\section{Central extensions in the category of quandles}
If $\catC$ is a finitely complete category, a double equivalence
relation $C$ in $\catC$ is an equivalence relation internal in the
category of equivalence relations in $\catC$. It can be
represented by a diagram
\begin{equation}\label{double}\vcenter{\hbox{\begin{tikzpicture}[scale=0.45]
    \node[] (C) at (-2,2) {$C$};
    \node[](R) at (-2,-2) {$R$};
    \node[](S) at (2,2) {$S$};
    \node[](X) at (2,-2) {$A$};
    \draw[->,transform canvas ={xshift=-0.5ex}] (C) to node[left]{$\pi_1$} (R);
    \draw[->,transform canvas ={xshift=0.5ex}] (C) to node[right]{$\pi_2$} (R);
    \draw[->,transform canvas ={xshift=-0.5ex}] (S) to node[left]{$s_1$} (X);
    \draw[->,transform canvas ={xshift=0.5ex}] (S) to node[right]{$s_2$} (X);
    \draw[->,transform canvas ={yshift=-0.5ex}] (C) to node[below]{$p_2$} (S);
    \draw[->,transform canvas ={yshift=0.5ex}] (C) to node[above]{$p_1$} (S);
    \draw[->,transform canvas ={yshift=-0.5ex}] (R) to node[below]{$r_2$} (X);
    \draw[->,transform canvas ={yshift=0.5ex}] (R) to node[above]{$r_1$} (X);
    \end{tikzpicture}}}
    \end{equation}
where $r_1 \circ \pi_1 = s_1 \circ p_1$, $r_1 \circ \pi_2 = s_2
\circ p_1$, $r_2 \circ \pi_1 = s_1 \circ p_2$ and $r_2 \circ \pi_2
= s_2 \circ p_2$. In this case one usually says that $C$ is a
double equivalence relation on the equivalence relations $R$ and
$S$.
\begin{definition}
Given equivalence relations $R$ and $S$ on $A$, a double
equivalence relation $C$ on $R$ and $S$ (as in \eqref{double}) is
called a \emph{centralizing relation} when the square
\[\begin{tikzpicture}[scale=0.35]
    \node[] (C) at (-2,2) {$C$};
    \node[](R) at (-2,-2) {$R$};
    \node[](S) at (2,2) {$S$};
    \node[](X) at (2,-2) {$A$};
    \draw[->] (C) to node[above]{$p_2$} (S);
    \draw[->] (C) to node[left]{$\pi_1$} (R);
    \draw[->] (R) to node[below]{$r_2$} (X);
    \draw[->] (S) to node[right]{$s_1$} (X);
    \end{tikzpicture}\]
is a pullback.
\end{definition}

\begin{definition}\label{defconnector}
A \emph{connector} between $R$ and $S$ is an arrow $p \colon R
\times_A S \to A$ such that \[\begin{tabular}{ll}
1. $p(x,x,y) = y$ & 1'. $p(x,y,y) = x$ \\
2. $x Sp(x,y,z)$ & 2'. $zRp(x,y,z)$ \\
3. $p(x,y,p(y,u,v)) = p(x,u,v)$ & 3'. $p(p(x,y,u),u,v) = p(x,u,v)$
\end{tabular}\]
\end{definition}

In the Mal'tsev context \cite{BG} the existence of a connector
between $R$ and $S$ is already guaranteed by the existence of a
\emph{partial Mal'tsev operation} $p \colon R \times_A S \to A$,
i.e. when the identities $p(x,x,y) = y$ and $p(x,y,y) = x$ in
Definition \ref{defconnector} are satisfied. Accordingly, in a
Mal'tsev category the existence of a double centralizing relation
on $R$ and $S$ is equivalent to the existence of a partial
Mal'tsev operation. Moreover, a connector is unique, when it
exists: accordingly, for two given equivalence relations, having a
connector becomes a property.

In a Mal'tsev variety a congruence $R$ on an algebra $A$ is called
\emph{algebraically central} if there is a centralizing double
relation on $R$ and $A \times A$, this latter being the largest
equivalence relation on $A$. In terms of commutators, this fact is
expressed by the condition $[R, A \times A]= \Delta_A$.

Given a surjective homomorphism $f \colon A \to B$ in the variety
$\Qnd$ of quandles such a connector between the kernel pair
$\Eq(f)$ and $A \times A$ may not be unique, or may not exist at
all. However, there is a special class of homomorphisms for which
such a connector is unique when it exists.


Given a homomorphism $f\colon A \rightarrow B$ in $\Qnd$, each
fiber $f^{-1}(b)$ (for $b \in B$) is a subquandle of $A$.
 We shall say that \emph{$f$ has abelian symmetric fibers} if $f^{-1}(b) \in \Qndsa$, for all $b \in B$.

 \begin{proposition} \label{symmetric implies special}
    If $f \colon A \to B$ has symmetric fibers, then it is
    $\Sigma$-special.
 \end{proposition}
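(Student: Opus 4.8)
The plan is to unwind the definition of $\Sigma$-special and reduce it to an elementary solvability statement inside each fiber. By definition, $f$ is $\Sigma$-special precisely when the split epimorphism $f_1 \colon \Eq(f) \to A$, equipped with its diagonal section $\delta \colon A \to \Eq(f)$, $\delta(a) = (a,a)$, belongs to the class $\Sigma$. Concretely, this asks that for every $a \in A$ the map $\delta(a) \lhd - \colon f_1^{-1}(a) \to f_1^{-1}(a)$ be surjective.

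First I would describe this fiber explicitly. Since $f_1$ is the first projection, $f_1^{-1}(a) = \{(a,a') \mid f(a')=f(a)\}$, which is in bijection with the fiber $f^{-1}(f(a))$ of $A$ via the second coordinate. As the quandle operation on $\Eq(f)$ is computed coordinatewise, idempotency (A1) gives $\delta(a) \lhd (a,a') = (a \lhd a,\, a \lhd a') = (a,\, a \lhd a')$, so under the above identification the map $\delta(a) \lhd -$ corresponds to the self-map $a \lhd - \colon f^{-1}(f(a)) \to f^{-1}(f(a))$ (which indeed lands in the fiber, since $f(a \lhd a') = f(a) \lhd f(a) = f(a)$). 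The whole claim thus reduces to showing that for every such $a$ and every $a' \in f^{-1}(f(a))$ there is some $y$ in the same fiber with $a \lhd y = a'$.

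This is where the hypothesis enters, and it is also the one conceptual obstacle: in an arbitrary quandle the left translation $a \lhd -$ need not be surjective, since only the right translations $- \lhd a$ are guaranteed to be bijective, via right invertibility (A2). The fix is to exploit that $f^{-1}(f(a)) \in \Qndsa$ is symmetric, so that on it $a \lhd y = y \lhd a$; this trades the non-invertible left translation for the invertible right one. Explicitly, I would set $y = a' \lhdi a$. Then $y$ lies in $f^{-1}(f(a))$, because $f(a' \lhdi a) = f(a') \lhdi f(a) = f(a) \lhdi f(a) = f(a)$ by (A1), and since both $a$ and $y$ belong to the same symmetric fiber we obtain $a \lhd y = y \lhd a = (a' \lhdi a) \lhd a = a'$ by (A2). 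This exhibits the required preimage (the element $k_{a,a'}$ of the definition of $\Sigma$), proving surjectivity of $\delta(a) \lhd -$ and hence that $f$ is $\Sigma$-special. Note that only symmetry of the fibers is used here, not abelianness, which matches the slightly weaker hypothesis in the statement.
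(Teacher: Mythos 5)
Your proof is correct and follows essentially the same route as the paper's: both exhibit the witness $a' \lhdi a$ in the fiber $f^{-1}(f(a))$, use symmetry of the fiber to replace the left translation $a \lhd -$ by the right translation $- \lhd a$, and conclude by right invertibility (A2). The only difference is expository: you spell out the identification of $f_1^{-1}(a)$ with $f^{-1}(f(a))$ and the reduction to surjectivity of $a \lhd -$ on that fiber, which the paper leaves implicit.
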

 \begin{proof}
    Consider the kernel pair of $f$
    \[\vcenter{\hbox{\begin{tikzpicture}[scale=0.45]
            \node[] (C) at (-2,2) {$\Eq(f)$};
            \node[](A) at (-2,-2) {$A$};
            \node[](a) at (2,2) {$A$};
            \node[](B) at (2,-2) {$B$};
            \draw[->,transform canvas ={xshift=-0.5ex}] (C) to node[left]{$f_1$} (A);
            \draw[<-,transform canvas ={xshift=0.5ex}] (C) to node[right]{$\delta_f$} (A);
            \draw[->] (A) to node[below]{$f$} (B);
            \draw[<-,transform canvas ={yshift=-0.5ex}] (C) to node[below]{$\delta_f$} (a);
            \draw[->,transform canvas ={yshift=0.5ex}] (C) to node[above]{$f_2$} (a);
            \draw[->] (a) to node[right]{$f$} (B);
            \end{tikzpicture}}}
    \]
    One has to check
    that $(f_1,\delta_f)$ is in $\Sigma$. Let $a \in A$ and $(a,a')
    \in f_1^{-1}(a)$, then in particular $f(a) = f(a')$, so that $a'
    \lhdi a$ is such that $f(a) = f(a' \lhdi a)$. It follows that $(a,
    a' \lhdi a) \in \Eq(f)$, and then
    \[(a,a) \lhd (a, a' \lhdi a) =
    (a \lhd a , a \lhd (a' \lhdi a)) = (a, (a' \lhdi a) \lhd a ) =
    (a,a').\]
 \end{proof}

 \begin{remark} \label{uniqueness}
    When a split epimorphism $f \colon A \to B$ with
    section $s \colon {B \to A}$ has symmetric fibers, then
    $s(b) \lhd - \colon f^{-1} (b) \rightarrow f^{-1} (b)$ is always
    injective: if $x \in f^{-1} (b) $ and $y \in f^{-1} (b) $ are such
    that $s(b) \lhd x =s(b) \lhd y$, since $s(b) \in f^{-1} (b)$, we
    get $x \lhd s(b) = y \lhd s(b)$, and hence $x=y$ by right
    invertibility.
 \end{remark}

\begin{lemma}\label{pullback}
Consider the following pullback
\[\begin{tikzpicture}[scale=0.45]
\node[] (M) at (-2,2) {$E \times_B A$}; \node[] (X) at (2,2)
{$A$}; \node[] (M') at (-2,-2) {$E$}; \node[] (X') at (2,-2)
{$B.$}; \draw[->] (M) to node[above]{$\pi_2$} (X); \draw[->] (M)
to node[left]{$\pi_1$} (M'); \draw[->] (X) to node[right]{$f$}
(X'); \draw[->] (M') to node[below]{$p$} (X'); \draw (-0.2,1.8) --
(-0.2,0.8) -- (-1.8,0.8);
\end{tikzpicture}\]
If $f \colon A \to B$ has abelian symmetric fibers then so does
$\pi_1 \colon E \times_B A \to E$. Moreover, if $p \colon E \to B$
is a surjective homomorphism, then $f \colon A \to B$ has abelian
symmetric fibers if $\pi_1 \colon E \times_B A \to E$ has abelian
symmetric fibers.
\end{lemma}

\begin{proof}
The first assertion follows from the fact that if $(e,a) \in E
\times_B A$ then the fibers $\pi_1^{-1} (e)$ and $f^{-1} (f(a))$
are isomorphic. The proof of the second assertion is similar, the
surjectivity of $p$ guaranteeing that, for any $a \in A$, there
exists $e \in E$ such that $(e,a) \in E \times_B A$.
\end{proof}

\begin{lemma}\label{jointepi} \cite{Bourn}
Let $f \colon A \rightarrow B$ be a split epimorphism, with
section $s \colon B \rightarrow A$, in $\Sigma$. Consider the
following pullback of $f$ along a split epimorphism $p \colon E
\to B$, with section $t \colon B \rightarrow E$:
\[\vcenter{\hbox{\begin{tikzpicture}[scale=0.5]
    \node[] (C) at (-3,2) {$E \times_B A$};
    \node[](R) at (-3,-2) {$E$};
    \node[](S) at (3,2) {$A$};
    \node[](X) at (3,-2) {$B.$};
    \draw[->,transform canvas ={xshift=-0.5ex}] (C) to node[left]{$\pi_1$} (R);
    \draw[<-,transform canvas ={xshift=0.5ex}] (C) to node[right]{$(1_E, s \circ p)$} (R);
    \draw[->,transform canvas ={xshift=-0.5ex}] (S) to node[left]{$f$} (X);
    \draw[<-,transform canvas ={xshift=0.5ex}] (S) to node[right]{$s$} (X);
    \draw[->,transform canvas ={yshift=-0.5ex}] (C) to node[below]{$\pi_2$} (S);
    \draw[<-,transform canvas ={yshift=0.5ex}] (C) to node[above]{$(t \circ f,1_A)$} (S);
    \draw[->,transform canvas ={yshift=-0.5ex}] (R) to node[below]{$p$} (X);
    \draw[<-,transform canvas ={yshift=0.5ex}] (R) to node[above]{$t$} (X);
    \end{tikzpicture}}}
    \]
Then $(1_E,s \circ p)$ and $(t \circ f , 1_A)$ are jointly
epimorphic.
\end{lemma}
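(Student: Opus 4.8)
The plan is to exploit the fact that in a variety of universal algebras a family of morphisms into a common codomain is jointly epimorphic precisely when the subalgebra generated by the union of their images is the whole codomain. I would therefore reduce the statement to the following concrete claim: every element $(e,a) \in E \times_B A$ lies in the subquandle $Q$ of $E \times_B A$ generated by the images of the two sections $(1_E, s \circ p)$ and $(t \circ f, 1_A)$.

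First I would fix $(e,a) \in E \times_B A$, set $b := p(e) = f(a)$, and record which simple elements already belong to $Q$. On the one hand, $(e, s(b)) = (e, s(p(e)))$ is the value of the section $(1_E, s \circ p)$ at $e$, hence lies in $Q$. On the other hand, for every $a' \in f^{-1}(b)$ the element $(t(b), a') = (t(f(a')), a')$ is the value of the section $(t \circ f, 1_A)$ at $a'$, hence also lies in $Q$; in particular both $(t(b), a)$ and $(t(b), k_a)$ will be available once a suitable $k_a$ is produced.

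Next I would invoke the hypothesis that $f$ belongs to $\Sigma$: since $a \in f^{-1}(b)$, there is an element $k_a \in f^{-1}(b)$ with $s(b) \lhd k_a = a$. The heart of the argument is then the identity
\[ (e,a) = \big( (e, s(b)) \lhd (t(b), k_a) \big) \lhdi (t(b), a), \]
which I would verify componentwise. Indeed $(e, s(b)) \lhd (t(b), k_a) = (e \lhd t(b),\, s(b) \lhd k_a) = (e \lhd t(b),\, a)$ by the choice of $k_a$, and then $(e \lhd t(b), a) \lhdi (t(b), a) = \big((e \lhd t(b)) \lhdi t(b),\, a \lhdi a\big) = (e, a)$ using right invertibility (A2) in the first coordinate and idempotency (A1) in the second. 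As the three building blocks $(e, s(b))$, $(t(b), k_a)$ and $(t(b), a)$ all lie in $Q$, so does $(e,a)$, and the two sections are jointly epimorphic.

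The only delicate point — the main obstacle — is discovering this combination rather than verifying it. The natural first move, applying $\lhd (t(b), k_a)$ to the section value $(e, s(b))$, already corrects the second coordinate to the desired $a$ via the $\Sigma$-condition, but it perturbs the first coordinate from $e$ to $e \lhd t(b)$. The insight is that this perturbation is cancelled by $\lhdi (t(b), a)$: right invertibility restores the first coordinate to $e$, while the deliberate use of $a$ itself (rather than any other fibre element) in the correcting factor makes idempotency $a \lhdi a = a$ leave the second coordinate fixed. Notably, no property of the fibres beyond $f \in \Sigma$ enters the argument.
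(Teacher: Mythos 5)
Your proof is correct and follows essentially the same route as the paper: both arguments hinge on the $\Sigma$-element $k_a$ with $s(b)\lhd k_a = a$ and on right invertibility to repair the $E$-coordinate, differing only in that the paper pre-adjusts the argument of the first section (writing $(e,a) = (1_E,s\circ p)(e\lhdi tp(e)) \lhd (t\circ f,1_A)(k_a)$ as a single product of two image elements), whereas you evaluate the sections at the natural points and post-correct with an extra factor, $(e,a) = \bigl((1_E,s\circ p)(e) \lhd (t\circ f,1_A)(k_a)\bigr) \lhdi (t\circ f,1_A)(a)$. Your reduction of joint epimorphy to generation of a subalgebra is the standard valid justification in a variety, and all componentwise computations check out.
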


\begin{proof}
Let $(e,a) \in E \times_B A$; we shall show that $(e,a)$ can be
rewritten as a product of two elements in the images of $(1_E,s
\circ p)$ and $(t \circ f , 1_A)$, respectively. Since the split
 epimorphism $f $ is in $\Sigma$, there exists an element $k_a \in f^{-1}
(f(a))$ such that $sf(a) \lhd k_a = a$. Also, we always have $e =
(e \lhd^{-1} t p(e)) \lhd t p (e)$. Accordingly, by using the fact
that $f(a) = f(k_a)$ and $p(e)=f(a)$, we see that
\begin{align*} (e,a) &= ((e \lhdi tp(e)) \lhd tp(e), sf(a) \lhd k_a) \\
&= (e \lhdi tp(e),sf(a)) \lhd (tp(e),k_a) \\
&= (e \lhdi tp(e), sp(e)) \lhd (tf(k_a),k_a) \\
&= (e \lhdi tp(e), sp(e \lhdi tp(e))) \lhd (tf(k_a),k_a) \\
&= (1_E, s \circ p) (e \lhdi { t}p(e)) \lhd (t \circ f , 1_A)
(k_a).
\end{align*}
\end{proof}

\begin{corollary}
Let $R$ be an equivalence relation and $S$ be a
$\Sigma$-equivalence relation on the same quandle $A$ in $\Qnd$.
If there is a connector on $R$ and $S$, then it is unique.
\end{corollary}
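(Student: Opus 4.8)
The plan is to reduce uniqueness to the joint epimorphism of Lemma~\ref{jointepi}. First I would recognize the domain of the connector as a pullback of the right shape. Writing $r_2\colon R\to A$ and $s_1\colon S\to A$ for the relevant projections, the object
\[
R\times_A S=\{(x,y,z)\mid (x,y)\in R,\ (y,z)\in S\}
\]
is precisely the pullback of $s_1$ along $r_2$, the shared coordinate being the middle one $y$. Now $r_2$ is a split epimorphism with section $\delta_R$, and since $S$ is assumed to be a $\Sigma$-equivalence relation, $s_1$ is a split epimorphism with section $\delta_S$ lying in $\Sigma$. Hence Lemma~\ref{jointepi}, applied with $f=s_1$ (the map in $\Sigma$) pulled back along $p=r_2$, tells us that the two induced sections
\[
\sigma_R\colon R\to R\times_A S,\ (x,y)\mapsto (x,y,y),
\qquad
\sigma_S\colon S\to R\times_A S,\ (y,z)\mapsto (y,y,z)
\]
are jointly epimorphic.

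Next I would observe that the defining identities of a connector force its restriction to each section, independently of the connector chosen. Indeed, any connector $p$ is by Definition~\ref{defconnector} a homomorphism $R\times_A S\to A$, and conditions $1'$ and $1$ give $p(x,y,y)=x$ and $p(y,y,z)=z$. Reading these along the two sections, $p\circ\sigma_R$ equals the projection $r_1\colon R\to A$, $(x,y)\mapsto x$, and $p\circ\sigma_S$ equals the projection $s_2\colon S\to A$, $(y,z)\mapsto z$. In particular neither composite depends on $p$.

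Finally, if $p$ and $p'$ are two connectors on $R$ and $S$, they are homomorphisms out of $R\times_A S$ with $p\circ\sigma_R=r_1=p'\circ\sigma_R$ and $p\circ\sigma_S=s_2=p'\circ\sigma_S$; since $\sigma_R$ and $\sigma_S$ are jointly epimorphic, this forces $p=p'$, establishing uniqueness. The construction is essentially immediate once the setup is in place, so I do not expect a genuine obstacle in the calculations; the only point requiring care is the bookkeeping of matching the projections of $R$ and $S$ to the data $(f,s,p,t)$ of Lemma~\ref{jointepi}, and in particular checking that it is $s_1$ (coming from $S$) that must be taken as the map in $\Sigma$. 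This is exactly the place where the hypothesis that $S$, rather than $R$, is the $\Sigma$-equivalence relation is used.
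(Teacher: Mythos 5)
Your proof is correct and is exactly the argument the paper intends: the paper's proof is the single line ``this follows directly from Lemma~\ref{jointepi},'' and your write-up supplies precisely the missing details --- identifying $R\times_A S$ as the pullback of $s_1\in\Sigma$ along the split epimorphism $r_2$, noting that the connector identities force any connector to restrict to $r_1$ and $s_2$ on the two induced sections, and concluding by their joint epimorphicity. Your bookkeeping (that it is $s_1$, coming from the $\Sigma$-equivalence relation $S$, which plays the role of the map in $\Sigma$) matches the paper's conventions.
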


\begin{proof}
This follows directly from Lemma~\ref{jointepi}.
\end{proof}

\begin{lemma}\label{connector}
Let $R$ be an equivalence relation and $S$ be a
$\Sigma$-equivalence relation on the same quandle $A$. For a
homomorphism $p \colon R \times_A S \to A,$ the following
conditions are equivalent :
\begin{enumerate}
\item $p$ is a partial Mal'tsev operation: $p(x,y,y) = x$ and
$p(x,x,y) = y$; \item $p$ is a connector between $R$ and $S$.
\end{enumerate}
\end{lemma}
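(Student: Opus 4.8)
The plan is to prove the two implications separately. The implication $(2)\Rightarrow(1)$ is immediate, since conditions $1$ and $1'$ of Definition~\ref{defconnector} are exactly the two partial Mal'tsev identities. For the converse $(1)\Rightarrow(2)$ I would assume that the homomorphism $p$ satisfies $p(x,y,y)=x$ and $p(x,x,y)=y$ and deduce the four remaining connector axioms $2$, $2'$, $3$, $3'$. The only extra hypothesis available is that $S$ is a $\Sigma$-equivalence relation, so that $s_1\colon S\to A$ together with its section $\delta_S\colon A\to S$ is a split epimorphism lying in $\Sigma$; this is exactly what makes Lemma~\ref{jointepi} applicable, and it is the engine of the whole argument.

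First I would record the joint epimorphism attached to $R\times_A S$. Viewing $R\times_A S$ as the pullback of the $\Sigma$-split epimorphism $s_1\colon S\to A$ along $r_2\colon R\to A$, Lemma~\ref{jointepi} produces two sections whose images are the degenerate families $\{(x,y,y)\mid x R y\}$ and $\{(x,x,z)\mid x S z\}$, and asserts that these two sections are jointly epimorphic. Consequently, to prove an equality of quandle homomorphisms out of $R\times_A S$ it suffices to verify it on these two families; since $p$ is a homomorphism, every map built from $p$ and the projections is a homomorphism and so falls under this principle.

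Next I would establish $2$ and $2'$. For $2$, I would compare the two homomorphisms $(x,y,z)\mapsto q_S(x)$ and $(x,y,z)\mapsto q_S(p(x,y,z))$ with values in the quotient quandle $A/S$, where $q_S\colon A\to A/S$ is the canonical projection. On the family $(x,y,y)$ they agree by $1'$, and on the family $(x,x,z)$ they agree because $p(x,x,z)=z$ by $1$ while $x S z$ holds by definition of that family; joint epimorphism then yields $x S p(x,y,z)$ for every triple, which is $2$. Condition $2'$ is obtained in the same way, comparing the third projection with $p$ after composing with $q_R\colon A\to A/R$.

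Finally, $3$ and $3'$ would be deduced by the same device applied to a larger pullback. The domain of $3$, namely $\{(x,y,u,v)\mid x R y,\ y R u,\ u S v\}$, is the pullback of $s_1$ along the split epimorphism $(x,y,u)\mapsto u$ defined on $\{(x,y,u)\mid xRy,\ yRu\}$, so Lemma~\ref{jointepi} presents it, in the joint-epi sense, through the two families $(x,y,u,u)$ and $(u,u,u,v)$; both sides of $3$ restrict to homomorphisms on this object once one knows, via $2$, that the inner term $p(y,u,v)$ is $S$-related to $y$, and they coincide on each family by $1$ and $1'$. Condition $3'$ is handled identically over $\{(x,y,u,v)\mid xRy,\ ySu,\ uSv\}$, now invoking $2'$ for the well-definedness of the composite. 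I expect the main obstacle to be organisational rather than computational: one must choose each pullback presentation so that the $\Sigma$-split epimorphism $s_1$ is the leg being pulled back, and one must establish $2$ and $2'$ \emph{before} $3$ and $3'$, since the latter only make sense once the relevant composites of $p$ are defined on the whole pullback object.
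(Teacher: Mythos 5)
Your proposal is correct and takes essentially the same route as the paper's own proof: the paper also establishes conditions $2$ and $2'$ by composing with the canonical quotients $A \to A/S$ and $A \to A/R$ and invoking the joint epimorphicity of the two degenerate inclusions from Lemma~\ref{jointepi}, and it also proves $3$ and $3'$ by checking the two sides agree on the degenerate families $(x,y,u,u)$ and $(u,u,u,v)$ of the larger pullback. The only (cosmetic) difference is that for $3$ and $3'$ the paper re-runs the explicit $\Sigma$-decomposition $(x,y,u,v) = (x \lhdi u,\, y \lhdi u,\, u,\, u) \lhd (u,u,u,k_v)$ by hand instead of citing Lemma~\ref{jointepi} for the four-fold pullback, which is exactly the abstract step you take.
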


\begin{proof}
This result is easily checked and also follows from
Lemma~\ref{jointepi}.
\end{proof}

From now on, we shall say that a surjective homomorphism with
abelian symmetric fibers $f \colon A \to B$ in $\Qnd$ is an
\emph{algebraically central extension} if its kernel congruence
$\Eq(f)$ is algebraically central: there is a connector between
$\Eq(f)$ and $A \times A$.

\begin{lemma}\label{decomposition}
Let $f \colon A \to B$ be an algebraically central extension with
abelian symmetric fibers, then $\Eq(f)$ is isomorphic to a product
$Q \times A$, where $Q$ is an abelian symmetric quandle.
\end{lemma}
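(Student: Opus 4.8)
The plan is to exhibit a concrete isomorphism $\Eq(f)\cong Q\times A$ where $Q$ is the fiber $f^{-1}(b_0)$ over a suitably chosen base point, or more robustly to construct $Q$ directly from the connector. First I would use the hypothesis that $f$ is an algebraically central extension with abelian symmetric fibers: by definition there is a connector $p$ between $\Eq(f)$ and $A\times A$, and by Corollary~\ref{jointepi} (uniqueness) together with Lemma~\ref{connector} this connector is unique and is a genuine partial Mal'tsev operation. The second projection $f_2\colon \Eq(f)\to A$, $(a,a')\mapsto a'$, is a split epimorphism in $\Qnd$ with section $\delta_f$, and since $f$ is $\Sigma$-special (Proposition~\ref{symmetric implies special}), this projection belongs to $\Sigma$. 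My aim is to show that this split epimorphism is in fact a \emph{product projection}, i.e. that $\Eq(f)$ splits as $Q\times A$ with $f_2$ corresponding to the projection onto $A$.

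The key steps I would carry out are as follows. I would define $Q$ to be the fiber $f_2^{-1}(a_0)=\{(a,a_0)\mid f(a)=f(a_0)\}$ over a fixed element $a_0$, which is isomorphic as a quandle to $f^{-1}(f(a_0))$ and hence lies in $\Qndsa$ by the abelian symmetric fiber hypothesis; this gives the required $Q$. Then I would build the candidate isomorphism $\varphi\colon \Eq(f)\to Q\times A$ by sending $(a,a')$ to $(\,p(a,a',a'_0)\,,\,a'\,)$ for an appropriate correction term, using the connector $p$ to transport an arbitrary pair back into the chosen fiber; the axioms $p(x,x,y)=y$ and $p(x,y,y)=x$ of Definition~\ref{defconnector}, together with axioms 2, 2', 3, 3', are exactly what is needed to check that $\varphi$ is a quandle homomorphism and to write down an explicit inverse. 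The abelian symmetric structure of the fibers ensures that the first coordinate genuinely lands in an object of $\Qndsa$ and that the two factors do not interact under $\lhd$, yielding a product decomposition rather than merely a semidirect-type splitting.

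The hard part will be verifying that $\varphi$ respects the quandle operation $\lhd$ on both coordinates simultaneously, and in particular that the first-coordinate component really is constant in the $A$-direction so that the decomposition is a direct product and not just a retraction. This is where the centrality encoded by the connector must be used in full: the connector identities guarantee precisely that the $\Eq(f)$-direction and the $A\times A$-direction commute, which is the algebraic manifestation of $[\Eq(f),A\times A]=\Delta_A$. I expect the cleanest route is to invoke Lemma~\ref{jointepi}, which says that the two sections of the relevant pullback are jointly epimorphic, so that to check $\varphi$ is a well-defined homomorphism with the claimed product structure it suffices to verify the relevant equalities on the images of the two sections separately, reducing the self-distributivity bookkeeping to the two already-established base cases rather than a single tangled four-variable computation. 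Confirming that $Q$ itself is symmetric and abelian — inherited from the fiber — is then immediate, completing the identification $\Eq(f)\cong Q\times A$.
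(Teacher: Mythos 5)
Your proposal is correct in substance, but it is a genuinely different proof from the one in the paper. The paper argues categorically and without choosing a base point: from the connector one passes to the centralizing double relation $C$ on $\Eq(f)$ and $A \times A$, takes the coequalizer $q \colon \Eq(f) \to Q$ of the two projections $C \rightrightarrows \Eq(f)$, and invokes the Barr--Kock theorem to conclude that the lower squares are pullbacks, exhibiting $\Eq(f)$ as the pullback of $Q \to 1 \leftarrow A$, i.e. $Q \times A$; that $Q \in \Qndsa$ then follows from Lemma~\ref{pullback} applied to $Q \to 1$. Your route instead builds the isomorphism explicitly from the connector, and it does go through --- in fact more easily than you anticipate. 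Writing, as in Lemma~\ref{algebraicpullback}, $p_f \colon A \times \Eq(f) \to A$ for the connector between $A \times A$ and $\Eq(f)$ (the full axioms of Definition~\ref{defconnector} are available because $\Eq(f)$ is a $\Sigma$-equivalence relation, by Proposition~\ref{symmetric implies special} and Lemma~\ref{connector}, exactly as you say), fix $a_0 \in A$, put $Q = f^{-1}(f(a_0))$, and define
\[
\varphi(a,a') = \bigl(p_f(a_0,a',a),\, a'\bigr), \qquad \psi(q,a') = \bigl(p_f(a',a_0,q),\, a'\bigr).
\]
Condition 2 guarantees that $\varphi$ lands in $Q \times A$ and $\psi$ in $\Eq(f)$; condition 3 followed by condition 1 gives $p_f(a',a_0,p_f(a_0,a',a)) = p_f(a',a',a) = a$ and $p_f(a_0,a',p_f(a',a_0,q)) = p_f(a_0,a_0,q) = q$, so $\varphi$ and $\psi$ are mutually inverse; and both are quandle homomorphisms simply because $p_f$ is one and constant maps are quandle homomorphisms by idempotency. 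Finally $Q \in \Qndsa$ is immediate from the hypothesis on the fibers. (Your written formula $p(a,a',a_0')$ is the same map with the connector's arguments in the reversed orientation; either order works once the base point sits in the slot whose fiber is controlled by condition 2 resp. 2'.) Note that this disposes of the step you expected to be hard: there is no residual verification that the factors ``do not interact,'' and no need for the jointly-epimorphic reduction via Lemma~\ref{jointepi} --- the product decomposition is precisely what the connector identities encode, since the homomorphism property of each coordinate of $\varphi$ and $\psi$ is automatic. As for what each approach buys: the paper's proof is base-point free and purely categorical, so it transfers to any exact setting where the cited results hold, whereas yours is elementary, explicit, stays entirely inside $\Qnd$, and makes visible the concrete content of centrality, namely that the connector transports every fiber of $f$ isomorphically onto a fixed one.
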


\begin{proof}
Let $C$ be the centralizing relation on $\Eq(f)$ and $A \times A$;
consider the following diagram
\[\vcenter{\hbox{\begin{tikzpicture}[scale=0.4]
    \node[] (C) at (-2,2) {$C$};
    \node[](R) at (-2,-2) {$\Eq(f)$};
    \node[](S) at (2,2) {$A \times A$};
    \node[](X) at (2,-2) {$A$};
    \node[] (B) at (6,-2) {$B$};
    \node[] (Q) at (-2,-6) {$Q$};
    \node[] (U) at (2,-6) {$1$};
    \draw[->,transform canvas ={xshift=-0.5ex}] (C) to node[left]{$c_1$} (R);
    \draw[->,transform canvas ={xshift=0.5ex}] (C) to node[right]{$c_2$} (R);
    \draw[->,transform canvas ={xshift=-0.5ex}] (S) to  (X);
    \draw[->,transform canvas ={xshift=0.5ex}] (S) to  (X);
    \draw[->,transform canvas ={yshift=-0.5ex}] (C) to (S);
    \draw[->,transform canvas ={yshift=0.5ex}] (C) to (S);
    \draw[->,transform canvas ={yshift=-0.5ex}] (R) to (X);
    \draw[->,transform canvas ={yshift=0.5ex}] (R) to  (X);
    \draw[->] (X) to node[above]{$f$} (B);
    \draw[->] (R) to node[left]{$q$} (Q);
    \draw[->] (X) to (U);
    \draw[->,transform canvas={yshift=0.5ex}] (Q) to (U);
    \draw[->,transform canvas={yshift=-0.5ex}] (Q) to (U);
    \end{tikzpicture}}}
    \]
where $q$ is the coequalizer of $c_1$ and $c_2$. By the Barr-Kock
theorem \cite{Barr, BournGran}, the lower squares are pullbacks.
By Lemma~\ref{pullback}, the homomorphism $Q \to 1$ has abelian
symmetric fibers, hence $Q$ is an abelian symmetric quandle.
\end{proof}

The results in~\cite{Bourn} will be useful to show that the
category of abelian symmetric quandles is admissible with respect
to the class of surjective homomorphisms in the category of
quandles. In the following we shall characterize categorically
central and normal extensions in $\Qnd$ with respect to the
adjunction between the category of quandles and the category of
abelian symmetric quandles:

\[\vcenter{\hbox{\begin{tikzpicture}[scale=0.3]
            \node[] (X) at (-5,0) {$\Qnd$};
            \node[] (Y) at (5,0) {$\Qndsa$};
            \node[line width=4pt] (C) at (0,0) {$\perp$};
            \draw[->] (X) to [bend left=25] node[above]{$I$} (Y);
            \draw[<-] (X) to [bend right=25] node[below]{$H$} (Y);
            \end{tikzpicture}}}\]

Observe that each component of the unit of the adjunction is a
surjective homomorphism, since {$\Qndsa$} is a subvariety of
{$\Qnd$}, thus in particular it is stable in {$\Qnd$} under
subalgebras. The following theorem shows that the functor $I$
preserves a certain type of pullbacks. This is equivalent to the
admissibility condition of the subvariety $\Qndsa$ of $\Qnd$.

\begin{theorem}\label{Admissible}
In the previous adjunction, the reflector $I \colon \Qnd \to
\Qndsa$ preserves all pullbacks in $\Qnd$ of the form
\begin{equation}\label{admpullback}
\vcenter{\hbox{\begin{tikzpicture}[scale=0.40] \node[] (M) at
(-2,2) {$P$}; \node[] (X) at (2,2) {$H(X)$}; \node[] (M') at
(-2,-2) {$A$}; \node[] (X') at (2,-2) {$H(Y)$}; \draw[->] (M) to
node[above]{$p_2$} (X); \draw[->] (M) to node[left]{$p_1$} (M');
\draw[->] (X) to node[right]{$\phi$} (X'); \draw[->] (M') to
node[below]{$f$} (X'); \draw (-0.2,1.8) -- (-0.2,0.8) --
(-1.8,0.8);
\end{tikzpicture}}}
\end{equation}
where  $\phi \colon H(X) \rightarrow H(Y)$ is a surjective
homomorphism lying in the subcategory $\Qndsa$ and $f \colon A \to
H(Y)$ is a surjective homomorphism.
\end{theorem}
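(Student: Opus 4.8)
The plan is to show that the canonical comparison morphism into the pullback of the images is an isomorphism. Since $\Qndsa$ is a subvariety of $\Qnd$, the reflection restricts to (an iso with) the identity on $\Qndsa$, so $IH \cong \mathrm{id}$ and the units $\eta_{H(X)}$, $\eta_{H(Y)}$ are isomorphisms; I shall identify $X = I(H(X))$ and $Y = I(H(Y))$ and write $\eta_A \colon A \to I(A)$, $\eta_P \colon P \to I(P)$ for the remaining units, with kernel congruences $\mu_A = \Eq(\eta_A)$ and $\mu_P = \Eq(\eta_P)$. Because $\Qndsa$ is closed under limits in $\Qnd$, the pullback $I(A)\times_Y X$ formed in $\Qndsa$ coincides with the one formed in $\Qnd$, and the statement is equivalent to the assertion that the comparison $\gamma \colon I(P) \to I(A)\times_Y X$ (the unique map with $\pi_1 \gamma = I(p_1)$ and $\pi_2 \gamma = I(p_2)$, where $\pi_1,\pi_2$ are the pullback projections) is an isomorphism. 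As $\Qndsa$ is exact, it suffices to prove that $\gamma$ is both a regular epimorphism and a monomorphism.

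Surjectivity of $\gamma$ is the easy half. Given $([a],x) \in I(A)\times_Y X$, I would lift $[a]$ to some $a \in A$ and $x$ to some $\tilde{x} \in H(X)$; the compatibility condition $I(f)([a]) = I(\phi)(x)$ together with the injectivity of $\eta_{H(Y)}$ forces $f(a) = \phi(\tilde{x})$ in $H(Y)$, so that $(a,\tilde{x})$ already lies in $P$ and $\gamma(\eta_P(a,\tilde{x})) = ([a],x)$. This can alternatively be deduced from Corollary \ref{surjective}.

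The monomorphism part is the crux, and the place where the non-Mal'tsev nature of $\Qnd$ must be circumvented. Writing $W = \{\,((a,x),(a',x')) \in P\times P \mid \eta_A(a) = \eta_A(a'),\ x = x'\,\}$, one checks that $\gamma$ is injective if and only if $W = \mu_P$; the inclusion $\mu_P \subseteq W$ is automatic (apply $I(p_1)$ and $I(p_2)$), and since $\eta_{H(X)}$ is an isomorphism one has $\mu_P \subseteq \Eq(p_2)$, so the whole difficulty is the reverse inclusion $W \subseteq \mu_P$. The key structural input is that $p_1 \colon P \to A$ is the pullback of $\phi$ along $f$: as $\phi$ is a morphism of $\Qndsa$ its fibers are abelian symmetric, hence by Lemma \ref{pullback} $p_1$ has abelian symmetric fibers, and by Proposition \ref{symmetric implies special} $p_1$ is $\Sigma$-special. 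Consequently, by the theorem preceding Corollary \ref{surjective}, the congruence $\Eq(p_1)$ permutes with $\mu_P$, whence $\Eq(p_1)\vee\mu_P = \mu_P\circ\Eq(p_1)$.

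The heart of the argument is then the identity $(p_1\times p_1)^{-1}(\mu_A) = \Eq(p_1)\vee\mu_P$. The inclusion $\supseteq$ is clear. For $\subseteq$, I would observe that $P/(\Eq(p_1)\vee\mu_P)$ is a quotient of $I(P) = P/\mu_P$, hence lies in $\Qndsa$; since this quotient factors through $p_1$ it has the form $A/\nu$ with $\nu \subseteq \mu_A$, and the minimality of $\mu_A$ (the smallest congruence on $A$ with abelian symmetric quotient) forces $\nu = \mu_A$, so the two congruences on $P$ coincide. Finally, given $((a,x),(a',x)) \in W$ one has $(a,a') \in \mu_A$, hence the pair lies in $\mu_P\circ\Eq(p_1)$; choosing an intermediate element produces $(a',x'')$ with $(a,x)\mathrel{\mu_P}(a',x'')$, and $\mu_P \subseteq \Eq(p_2)$ then forces $x'' = x$, giving $(a,x)\mathrel{\mu_P}(a',x)$. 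Thus $W \subseteq \mu_P$, $\gamma$ is a monomorphism, and the proof concludes. I expect the identity $(p_1\times p_1)^{-1}(\mu_A) = \Eq(p_1)\vee\mu_P$ — and in particular the reliance on the permutability furnished by $\Sigma$-specialness — to be the main obstacle, since it is precisely what replaces the Mal'tsev hypothesis invoked in the classical Birkhoff case.
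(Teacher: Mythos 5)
Your proof is correct, but it is organized quite differently from the paper's. The paper works upstairs in $\Qnd$: it forms the auxiliary pullback $P'=A\times_{HI(A)}HI(P)$ of $HI(p_1)$ along the unit $\eta_A$, proves the comparison $P\to P'$ bijective --- there surjectivity is the hard half, obtained from Corollary~\ref{surjective} applied to the naturality square of $\eta$ at $p_1$ (implicitly a pushout), while injectivity is trivial since $p_1,p_2$ are jointly monic --- and then descends along the surjection $\eta_A$ (Proposition~2.7 of \cite{JK}) to conclude that the reflected square is a pullback. You instead work downstairs in $\Qndsa$ with the comparison $I(P)\to I(A)\times_Y X$, where the distribution of difficulty is reversed: surjectivity is the trivial half (your lifting argument needs no $\Sigma$-specialness at all, only that the cospan lies in the subvariety), and injectivity carries all the weight, via the identity $(p_1\times p_1)^{-1}(\mu_A)=\Eq(p_1)\vee\mu_P$ combined with permutability. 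Note that this identity is exactly the assertion that the naturality square at $p_1$ is a pushout, which the paper uses silently when it invokes Corollary~\ref{surjective}; and both proofs run on the same engine, namely that $\phi$ lying in $\Qndsa$ forces $p_1$ to have abelian symmetric fibers (Lemma~\ref{pullback}), hence to be $\Sigma$-special (Proposition~\ref{symmetric implies special}), hence to have all congruences permute with $\Eq(p_1)$ --- the substitute for the Mal'tsev hypothesis of the classical Birkhoff argument. What your route buys is a self-contained, element-and-congruence-level proof that needs no descent lemma and makes explicit where the Birkhoff property and the permutability enter; what the paper's route buys is that the variety-specific content is isolated in a single lemma, the rest being purely categorical and therefore more portable. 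One presentational quibble: minimality of $\mu_A$ gives the inclusion $\mu_A\subseteq\nu$, whereas $\nu\subseteq\mu_A$ comes from the ``clear'' inclusion you had already established, so you should say that it is the conjunction of the two that yields $\nu=\mu_A$; also, your aside that surjectivity ``can alternatively be deduced from Corollary~\ref{surjective}'' would require routing through the paper's auxiliary pullback, so the direct argument you give is the one to keep.
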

\begin{proof}
Consider the following commutative diagram where:
\begin{itemize}
\item the square on the back is the given pullback, where $\phi
\colon H(X) \rightarrow H(Y)$ is a surjective homomorphism in the
subcategory $\Qndsa$; \item the universal property of~the unit
$\eta_{P} \colon P \to HI(P)$ induces a unique arrow \linebreak
$HI(p_2) \colon HI(P) \to H(X)$ with $HI(p_2) \circ \eta_{P} =
p_2$; \item the universal property of~the unit $\eta_{A} \colon A
\to HI(A)$ induces a unique arrow \linebreak $HI(f) \colon HI(A)
\to H(Y)$ with $HI(f) \circ \eta_{A} = f$; \item $(P',\pi_1,
\pi_2)$ is the pullback of $HI(p_1)$ along $\eta_A$.
\end{itemize}
\[\vcenter{\hbox{\begin{tikzpicture}[scale=0.7]
\node[] (P) at (-4,2) {$P$}; \node[] (HX) at (4,2) {$H(X)$};
\node[] (A) at (-4,-2) {$A$}; \node[] (HY) at (4,-2) {$H(Y)$};
\node[] (P') at (-1,0) {$P'$}; \node[] (HIP) at (1,0) {$HI(P)$};
\node[] (HIA) at (1,-4) {$HI(A)$}; \draw[->] (P) to
node[above]{$p_2$} (HX); \draw[->] (P) to node[left]{$p_1$} (A);
\draw[->] (HX) to node[right]{$\phi$} (HY); \draw[->] (A) to
node[below right=0pt and 10pt]{$f$} (1,-2) node[fill=white]{} to
(HY); \draw[->, dotted] (P) to node[below left]{$\gamma$} (P');
\draw[->] (P) to node[above right]{$\eta_P$} (HIP); \draw[->] (P')
to node[below right]{$\pi_1$} (A); \draw[->] (P') to
node[below]{$\pi_2$} (HIP); \draw[->] (HIP) to node[above
left]{$HI(p_2)$} (HX); \draw[->] (HIP) to node[above right=10pt
and 0pt]{$HI(p_1)$} (HIA); \draw[->] (A) to node[below
left]{$\eta_A$} (HIA); \draw[->] (HIA) to node[below
right]{$HI(f)$} (HY);
\end{tikzpicture}}}\]
The quandle homomorphism $p_1$ is $\Sigma$-special by
Lemma~\ref{pullback} since $\phi$ has abelian symmetric fibers,
thus the homomorphism $\gamma$ is surjective by
Corollary~\ref{surjective}. The fact that~$\pi_1 \circ \gamma =
p_1$ and $HI(p_2) \circ \pi_2 \circ \gamma = p_2$ implies that
$\gamma$ is also injective. Indeed, this latter property follows
from the fact that the pullback projections $p_1$ and $p_2$ are
jointly monomorphic. Accordingly, the arrow $\gamma$ is bijective,
thus an isomorphism. Since~$\eta_A$ is a surjective homomorphism
it follows that the right face of the diagram is a pullback (see
Proposition~$2.7$ in~\cite{JK}, for instance), and the pullback
\ref{admpullback} is preserved by the functor $I$, as desired.
\end{proof}

\begin{corollary}\label{product}
The functor $I$ preserves products of the type $A \times Q$ where
$Q$ is an abelian symmetric quandle and $A$ is any quandle.
\end{corollary}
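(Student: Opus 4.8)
The plan is to exhibit the product $A \times Q$ as a pullback of the special form \eqref{admpullback} and then invoke Theorem~\ref{Admissible} directly. The guiding observation is that a product is nothing but a pullback over the terminal object, and that the terminal quandle happens to lie in $\Qndsa$, so the hypotheses of the admissibility theorem are met.

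First I would recall that the terminal object $1$ in $\Qnd$ is the one-element quandle, which is trivially abelian and symmetric; hence $1 \in \Qndsa$, and we may write $1 = H(1)$. Since $Q \in \Qndsa$ we likewise identify $Q = H(Q)$. The product $A \times Q$ is then the pullback of the unique homomorphisms $f \colon A \to 1$ and $\phi \colon Q \to 1$. Both of these maps are surjective (as $A$ and $Q$ are nonempty), and $\phi \colon H(Q) \to H(1)$ is a surjective homomorphism in the subcategory $\Qndsa$; thus this pullback is exactly of the type \eqref{admpullback} covered by Theorem~\ref{Admissible}.

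Applying Theorem~\ref{Admissible}, the reflector $I$ preserves this pullback, so that $I(A \times Q) \cong I(A) \times_{I(1)} I(Q)$. Because $H$ is the inclusion of a subvariety it is fully faithful, whence $I \circ H \cong \mathrm{id}_{\Qndsa}$; in particular $I(1) = I H(1) \cong 1$ and $I(Q) = I H(Q) \cong Q$. Since a pullback over the terminal object is simply a product, I conclude that $I(A \times Q) \cong I(A) \times I(Q)$, which is precisely the assertion that $I$ preserves the product $A \times Q$.

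The argument is essentially formal once the correct pullback is identified, so I do not anticipate a serious obstacle. The only points requiring care are verifying that the terminal quandle genuinely belongs to $\Qndsa$ (so that the pullback falls under the hypotheses of Theorem~\ref{Admissible}) and that $I$ fixes it, both of which follow from the full faithfulness of the inclusion $H$.
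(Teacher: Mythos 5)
Your proof is correct and is essentially identical to the paper's own argument: the paper likewise writes $A \times Q$ as the pullback of $A \to 1$ and $Q \to 1$ over the terminal (one-element) quandle, which lies in $\Qndsa$, and then invokes Theorem~\ref{Admissible}. The extra remarks on full faithfulness of $H$ and $I(1) \cong 1$ are fine but not needed beyond what the paper already records.
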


\begin{proof}
Remark that $A \times Q$ is the following pullback
\[\vcenter{\hbox{\begin{tikzpicture}[scale=0.4]
\node[] (M) at (-2,2) {$A \times Q$}; \node[] (X) at (2,2) {$Q$};
\node[] (M') at (-2,-2) {$A$}; \node[] (X') at (2,-2) {$1$};
\draw[->] (M) to node[above]{$p_2$} (X); \draw[->] (M) to
node[left]{$p_1$} (M'); \draw[->] (X) to  (X'); \draw[->] (M') to
(X');
\end{tikzpicture}}}
\] where $1$ is the terminal object in $\Qnd$, i.e.
the trivial quandle with one element.
\end{proof}

\begin{lemma}\label{algebraicpullback}
Consider the following pullback
\begin{equation}\label{pullbackcentral}
\vcenter{\hbox{\begin{tikzpicture}[scale=0.4] \node[] (M) at
(-2,2) {$E \times_B A$}; \node[] (X) at (2,2) {$A$}; \node[] (M')
at (-2,-2) {$E$}; \node[] (X') at (2,-2) {$B.$}; \draw[->] (M) to
node[above]{$\pi_2$} (X); \draw[->] (M) to node[left]{$\pi_1$}
(M'); \draw[->] (X) to node[right]{$f$} (X'); \draw[->] (M') to
node[below]{$p$} (X');
\end{tikzpicture}}}
\end{equation}
 If $f$ is an algebraically central extension
with abelian symmetric fibers, then $\pi_1$ is an algebraically
central extension with abelian symmetric fibers.

Moreover, if $p \colon E \to B$ is a surjective homomorphism, then
$f$ is an algebraically central extension with abelian symmetric
fibers if $\pi_1$ is an algebraically central extension with
abelian symmetric fibers.
\end{lemma}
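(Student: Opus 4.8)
The plan is to handle both statements in parallel, splitting each into an ``abelian symmetric fibers'' part and an ``algebraically central'' part, since the former is already supplied by Lemma~\ref{pullback}. For the forward implication the fibers of $\pi_1$ are abelian symmetric by the first assertion of Lemma~\ref{pullback}; for the backward implication the hypothesis that $p$ is surjective lets me invoke the second assertion of Lemma~\ref{pullback} to transfer abelian symmetric fibers from $\pi_1$ back to $f$. Surjectivity is immediate from exactness of $\Qnd$: $\pi_1$ (resp. $\pi_2$) is the pullback of the regular epimorphism $f$ (resp. $p$), hence a regular epimorphism, and in the backward direction $f$ is surjective because $f\circ\pi_2 = p\circ\pi_1$ is. Thus in both directions it remains only to produce a connector between the kernel pair of the source and its largest congruence.

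For the forward implication I would build the connector on $\pi_1$ directly from the given connector $p_f$ between $\Eq(f)$ and $A\times A$. Writing a typical element of $\Eq(\pi_1)\times_{E\times_B A}\big((E\times_B A)\times(E\times_B A)\big)$ as $\big((e,a),(e,a'),(e'',a'')\big)$ with $f(a)=f(a')=p(e)$ and $f(a'')=p(e'')$, I set
\[
q\big((e,a),(e,a'),(e'',a'')\big) = \big(e'',\, p_f(a,a',a'')\big).
\]
This lands in $E\times_B A$ because axiom $2'$ for $p_f$ gives $f(p_f(a,a',a''))=f(a'')=p(e'')$; it is a homomorphism since $p_f$ and the projections are; and each connector axiom $1,1',2,2',3,3'$ for $q$ reduces coordinatewise to the corresponding axiom for $p_f$ (the $E$-coordinate handled by the projections, the $A$-coordinate by $p_f$, axiom $2$ being vacuous on the full relation). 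Hence $\Eq(\pi_1)$ is algebraically central and $\pi_1$ is an algebraically central extension.

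For the backward implication I would instead descend the given connector $q$ on $\pi_1$ along the regular epimorphism $\pi_2\colon E\times_B A\to A$. Since $p$ is surjective, applying $\pi_2$ coordinatewise yields a surjective comparison homomorphism
\[
\Theta\colon \Eq(\pi_1)\times_{E\times_B A}\big((E\times_B A)\times(E\times_B A)\big)\longrightarrow \Eq(f)\times_A (A\times A),
\]
and the candidate connector is $p_f(a,a',a'')=\pi_2\,q\big((e,a),(e,a'),(e'',a'')\big)$ for any compatible choice of $e,e''$. The main obstacle is well-definedness: one must show $\pi_2\circ q$ is constant on the fibers of $\Theta$, equivalently that it coequalizes $\Eq(\Theta)$, so that it factors through the regular epimorphism $\Theta$. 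Here I would use that $\pi_1$ is $\Sigma$-special by Proposition~\ref{symmetric implies special}, so that $\Eq(\pi_1)$ is a $\Sigma$-equivalence relation and $q$ is the \emph{unique} connector between $\Eq(\pi_1)$ and $(E\times_B A)\times(E\times_B A)$ (uniqueness of connectors, via Lemma~\ref{jointepi}). Uniqueness forces $q$ to be invariant under replacing $e$ by another point of $E$ with the same $p$-image, which is exactly the invariance of $\pi_2\circ q$ along $\Eq(\pi_2)$ needed to kill the $E$-coordinates; concretely, the joint epimorphism of Lemma~\ref{jointepi} is the device that expresses any such change of $E$-data through the two sections of the pullback, along which $\pi_2\circ q$ is manifestly unchanged, yielding the required equality of values.

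Once well-definedness is secured the rest is formal: $p_f$ is a homomorphism because it is induced by the homomorphism $\pi_2\circ q$ along the regular epimorphism $\Theta$, and each connector axiom $1,1',2,2',3,3'$ for $p_f$ follows from the corresponding axiom for $q$ by lifting elements of $\Eq(f)\times_A (A\times A)$ through the surjection $\Theta$ and applying $\pi_2$, which is legitimate since $\Theta$ is epimorphic. Therefore $\Eq(f)$ is algebraically central and $f$ is an algebraically central extension. I expect the delicate point throughout to be precisely this descent of the connector in the backward direction; by contrast the forward construction and all the fiber statements are routine given Lemma~\ref{pullback} and the uniqueness of connectors.
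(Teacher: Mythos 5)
Your forward direction and your treatment of the fiber conditions are correct and essentially identical to the paper's: the connector for $\pi_1$ is built componentwise from $p_f$, it lands in $E\times_B A$ by axiom $2'$, and the axioms are checked coordinatewise (the paper shortens this by checking only the two partial Mal'tsev identities and invoking Lemma~\ref{connector}, which applies because $\Eq(\pi_1)$ is a $\Sigma$-equivalence relation). Your backward direction also follows the paper's overall strategy, namely descending $\pi_2\circ q$ along the surjection $\Theta$ (the paper's $\widehat{\pi_2}$) by proving it is constant on the fibers of $\Theta$. However, the justification you offer for that constancy has a genuine gap. Uniqueness of connectors says only that two connectors between the \emph{same} pair of relations coincide; it gives no information about how the single connector $q$ behaves when the $E$-coordinates of its arguments are replaced by other points with the same $p$-image, because no endomorphism of the data realizes such a replacement, and hence no ``transported'' second connector is available to compare $q$ with. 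So the sentence ``uniqueness forces $q$ to be invariant under replacing $e$ by another point of $E$ with the same $p$-image'' is an assertion, not a proof, and it is precisely the point at issue.

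What actually makes the Lemma~\ref{jointepi}-style decomposition work is an ingredient your proposal never isolates: the second factor of the decomposition involves an element $k_c$ with $b\lhd k_c=c$ and $f(k_c)=f(b)$, and one must know that this element is \emph{the same} for the two lifts being compared. Its existence comes from $f$ (not $\pi_1$) being $\Sigma$-special, which holds by Proposition~\ref{symmetric implies special} because $f$ has abelian symmetric fibers (Lemma~\ref{pullback}, using the surjectivity of $p$ --- a fact you did establish but then never use at this step); its uniqueness inside the fiber $f^{-1}(f(b))$ comes from the symmetry of that fiber, via Remark~\ref{uniqueness}. With this canonical $k_c$ in hand one writes, in the paper's ordering of the arguments,
\[
((e_i,a),(e_i',b),(e_i',c)) = \bigl((e_i,a)\lhdi (e_i',b),\,(e_i',b),\,(e_i',b)\bigr)\lhd\bigl((e_i',b),\,(e_i',b),\,(e_i',k_c)\bigr),
\]
and, since $q$ is a homomorphism satisfying the partial Mal'tsev identities, obtains
\[
\pi_2\, q\bigl((e_i,a),(e_i',b),(e_i',c)\bigr) = (a\lhdi b)\lhd k_c ,
\]
which is visibly independent of $e_i,e_i'$. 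It is this explicit computation, and not uniqueness of connectors, that closes the descent argument: without the uniqueness of $k_c$ the decompositions for the two lifts could involve different $A$-components, and your ``manifestly unchanged'' claim fails exactly there.
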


\begin{proof}
First remark that we already know that the property of having
abelian symmetric fibers is preserved and reflected by pullbacks
along surjective homomorphisms by Lemma~\ref{pullback}.

Let $f \colon A \to B$ be an algebraically central extension with
abelian symmetric fibers. Write $p_f \colon A \times \Eq(f) \to A$
for the connector between $A \times A$ and $\Eq(f)$. Define the
quandle homomorphism $p_{\pi_1} \colon (E \times_B A ) \times
\Eq(\pi_1) \to E \times_B A$ as $p_{\pi_1} \left((e,a),
(e',b),(e',c)\right) = (e,p_f(a,b,c))$. We have
\[p_{\pi_1}((e,a),(e',b),(e',b)) = (e, p_f(a,b,b)) = (e,a)\] and
\[p_{\pi_1}((e,a),(e,a),(e,b)) = (e, p_f(a,a,b)) = (e,b).\] It is
then a connector by Lemma~\ref{connector}.

Now let $\pi_1 \colon E \times_B A \to E$ be an algebraically
central extension with abelian symmetric fibers. Write $p_{\pi_1}
\colon (E \times_B A) \times \Eq(\pi_1) \to E \times_B A$ for the
connector between $(E \times_B A) \times (E \times_B A)$ and
$\Eq(\pi_1)$. The surjectivity of $p \colon E \to B$ implies the
surjectivity of the homomorphism $\widehat{\pi_2} \colon (E
\times_B A) \times \Eq(\pi_1) \to A \times \Eq(f)$ defined by
\[\widehat{\pi_2} ((e,a),(e',b),(e',c)) = (a,b,c).\] First let us
show that $\Eq(\widehat{\pi_2}) \subset \Eq(\pi_2 \circ
p_{\pi_1})$. Let $$(((e_0,a),(e_0',b),(e_0',c)),
((e_1,a),(e_1',b),(e_1',c))) \in \Eq(\widehat{\pi_2}).$$ Since $f$
has abelian symmetric fibers by Lemma \ref{pullback}, it is
$\Sigma$-special by Proposition \ref{symmetric implies special}.
This means that the split epimorphism $\xymatrix{ \Eq(f)
\ar@<-2pt>[r]_-{f_1} & A \ar@<-2pt>[l]_-{{\delta_f} }}$ is in
$\Sigma$. In other terms, for all $b \in A$ and all $(b, c) \in
f_1^{-1}(b)$ there exists $k_{(b,c)} \in f_1^{-1}(b)$, where
$k_{(b,c)} = (b,k_c)$, such that $(b,b) \lhd k_{(b,c)} = (b,c)$.
Such a $k_{(b,c)}=  (b,k_c)$ is unique by Remark \ref{uniqueness}:
it follows that, for any $(b,c) \in \Eq(f)$, the element $k_c \in
A$ such that $f(k_c) = f(b) = f(c)$ and $b \lhd k_c = c$ is
unique. Then, for $i \in \{0,1\}$, we have
\[((e_i,a),(e_i',b),(e_i',c)) = ((e_i,a) \lhdi (e_i',b)
,(e_i',b),(e_i',b)) \lhd ((e_i',b),(e_i',b),(e_i',k_c)).\]
Consequently we remark that
\begin{align*}
&\pi_2 \circ p_{\pi_1} ((e_i,a),(e_i',b),(e_i',c)) \\
&=\pi_2 \circ p_{\pi_1} (((e_i,a) \lhdi (e_i',b) ,(e_i',b),(e_i',b)) \lhd ((e_i',b),(e_i',b),(e_i',k_c)))  \\
&=\pi_2 ( p_{\pi_1} ((e_i,a) \lhdi (e_i',b) ,(e_i',b),(e_i',b)) \lhd p_{\pi_1} ((e_i',b),(e_i',b),(e_i',k_c))  \\
&=\pi_2 (((e_i,a) \lhdi (e_i',b)) \lhd (e_i',k_c))  \\
&=\pi_2 ((e_i \lhdi e_i')\lhd e_i', (a \lhdi b) \lhd k_c) = (a
\lhdi b) \lhd k_c
\end{align*}
for both $i \in \{0,1\}$. This implies that $\Eq(\widehat{\pi_2})
\subset \Eq(\pi_2 \circ p_{\pi_1})$, and there is then a unique
quandle homomorphism $p_f \colon A \times \Eq(f) \to A$ such that
$p_f \circ \widehat{\pi_2} = \pi_2 \circ p_{\pi_1}$, i.e. $p_f
(a,b,c) = (a \lhdi b) \lhd k_c$ where $k_c$ is the unique element
such that $b \lhd k_c = c$ as above. Moreover, we have
\[p_f(a,b,b) = (a \lhd^{-1} b) \lhd b = a \] for $(a,b,b) \in A
\times \Eq(f)$ and \[p_f(a,a,b) = (a \lhd^{-1} a) \lhd k_b = a
\lhd k_b = b \] for $(a,a,b) \in A \times \Eq(f)$, so $p_f$ is a
connector by Lemma~\ref{connector}.
\end{proof}

Before stating our main result, we recall that a surjective
homomorphism $f \colon A \to B$ is a \emph{normal extension} when
the homomorphism $f_1$ in the pullback of $f$ along itself is a
trivial extension

\[\vcenter{\hbox{\begin{tikzpicture}[scale=0.4]
\node[] (M) at (-2,2) {$\Eq(f)$}; \node[] (X) at (2,2) {$A$};
\node[] (M') at (-2,-2) {$A$}; \node[] (X') at (2,-2) {$B.$};
\draw[->] (M) to node[above]{$f_2$} (X); \draw[->] (M) to
node[left]{$f_1$} (M'); \draw[->] (X) to node[right]{$f$} (X');
\draw[->] (M') to node[below]{$f$} (X'); \draw (-0.2,1.8) --
(-0.2,0.8) -- (-1.8,0.8)   ;
\end{tikzpicture}}}\]
 (see the Introduction for the definitions of trivial extension and of central extension).
\begin{theorem}\label{mainresult}
Given a surjective homomorphism $f \colon A \to B$ in $\Qnd$, the
following conditions are equivalent:
\begin{enumerate}
\item $f$ is an algebraically central extension with abelian
symmetric fibers; \item $f$ is a normal extension; \item $f$ is a
central extension.
\end{enumerate}
\end{theorem}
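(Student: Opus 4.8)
The plan is to prove the cycle $(2)\Rightarrow(3)\Rightarrow(1)\Rightarrow(2)$. The implication $(2)\Rightarrow(3)$ is immediate: if $f$ is normal then, by definition, the projection $f_1$ in the pullback of $f$ along itself is a trivial extension; choosing $p=f$ in the definition of central extension identifies $E\times_B A$ with $\Eq(f)$ and $\pi_1$ with $f_1$, so $f$ is central.

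For $(1)\Rightarrow(2)$ I would rely on Lemma~\ref{decomposition}: since $f$ is algebraically central with abelian symmetric fibers, $\Eq(f)\cong Q\times A$ for an abelian symmetric quandle $Q$, and under this isomorphism $f_1$ becomes the product projection onto the factor $A$. It then suffices to see that this projection is a trivial extension, i.e.\ that the naturality square of the unit over $f_1$ is a pullback. By Corollary~\ref{product} the reflector $I$ preserves the product $Q\times A$, and since $Q$ already lies in $\Qndsa$ the unit $\eta_{Q\times A}$ is $\mathrm{id}_Q\times\eta_A$; hence that square is precisely the pullback of the projection $Q\times I(A)\to I(A)$ along $\eta_A$, which is a pullback. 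Thus $f_1$ is trivial and $f$ is normal.

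The substantive implication is $(3)\Rightarrow(1)$. Let $p\colon E\to B$ be a surjection for which $\pi_1\colon E\times_B A\to E$ is a trivial extension. The crux is to show that every trivial extension $g\colon M\to N$ is an algebraically central extension with abelian symmetric fibers. The triviality of $g$ gives $M\cong N\times_{HI(N)}HI(M)$, so each fiber $g^{-1}(n)$ is isomorphic to a fiber of $HI(g)$, hence to a subquandle of $HI(M)\in\Qndsa$, which is abelian symmetric since $\Qndsa$ is a subvariety. For algebraic centrality I would produce a connector between $\Eq(g)$ and $M\times M$ by the formula
\[p\bigl((n_1,u_1),(n_1,u_2),(n_3,u_3)\bigr)=\bigl(n_3,\;p_{HI(M)}(u_1,u_2,u_3)\bigr),\]
where $m_i=(n_i,u_i)$ and $p_{HI(M)}$ is the Mal'tsev operation on the abelian symmetric quandle $HI(M)$. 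Using that $HI(g)$ preserves $p_{HI(M)}$ one checks that the right-hand side lands in the pullback $M$, and the partial Mal'tsev identities $p(x,x,y)=y$, $p(x,y,y)=x$ follow directly from those of $p_{HI(M)}$; since $M\times M$ is plainly a $\Sigma$-equivalence relation, Lemma~\ref{connector} upgrades this partial Mal'tsev operation to a connector. Applying the reflecting half of Lemma~\ref{algebraicpullback} to the pullback along the surjection $p$ then transfers both properties from $\pi_1$ to $f$, which is $(1)$.

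The main obstacle is the connector construction for a trivial extension: the verification that the candidate map is well defined into $M$ and is a quandle homomorphism rests on the Mal'tsev operation of $HI(M)$ being a homomorphism --- guaranteed by the identification $\Qndsa=\mathsf{Mal}(\Qnds)$ --- and on its compatibility with $HI(g)$, which is exactly what forces the formula to respect the pullback defining $M$.
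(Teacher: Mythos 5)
Your overall route is the paper's own: (1)$\Rightarrow$(2) via Lemma~\ref{decomposition} and Corollary~\ref{product}, (2)$\Rightarrow$(3) by taking $p=f$, and (3)$\Rightarrow$(1) by showing that the trivial extension $\pi_1$ is an algebraically central extension with abelian symmetric fibers and then invoking the reflecting half of Lemma~\ref{algebraicpullback}. The paper compresses the middle step of (3)$\Rightarrow$(1) into the remark that $\pi_1$ is the pullback of a morphism lying in $\Qndsa$, so that the preserving half of Lemma~\ref{algebraicpullback} applies; you instead construct the connector on $M\cong N\times_{HI(N)}HI(M)$ explicitly from the Mal'tsev operation of $HI(M)$, which is a legitimate unwinding of the same idea.

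However, one justification you give is false: $M\times M$ is \emph{not} ``plainly a $\Sigma$-equivalence relation''. The first projection $M\times M\to M$ with the diagonal as section lies in $\Sigma$ only if for all $m,m'\in M$ there exists $k$ with $m\lhd k=m'$, and this fails already when $M$ is a trivial quandle with two elements (trivial extensions can have such domains: every identity map is a trivial extension). The problem is one of ordering: in Lemma~\ref{connector} it is the \emph{second} relation $S$ in $R\times_A S$ that must be a $\Sigma$-equivalence relation, while your formula puts the $\Eq(g)$-related pair in the first two slots and the arbitrary element last, so you would need $S=M\times M$ to be in $\Sigma$. The repair is immediate and uses only what you already established: since $g$ has abelian symmetric fibers, $\Eq(g)$ is a $\Sigma$-equivalence relation by Proposition~\ref{symmetric implies special}; so define instead
\[
p'\colon (M\times M)\times_M \Eq(g)\to M,\qquad
p'\bigl((n_1,u_1),(n_2,u_2),(n_2,u_3)\bigr)=\bigl(n_1,\;p_{HI(M)}(u_1,u_2,u_3)\bigr),
\]
the transpose of your map. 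The same computations show that $p'$ is a well-defined quandle homomorphism satisfying the partial Mal'tsev identities, and now Lemma~\ref{connector} applies with $S=\Eq(g)$, yielding the connector between $M\times M$ and $\Eq(g)$ exactly in the form used in Lemma~\ref{algebraicpullback}. With this correction the rest of your argument goes through.
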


\begin{proof}
Let $f \colon A \to B$ be an algebraically central extension with
abelian symmetric fibers, then its kernel pair $\Eq(f)$ is
isomorphic to a product $Q \times A$ with $Q$ an abelian symmetric
quandle by Lemma~\ref{decomposition}. Corollary~\ref{product}
shows that $f$ is then a normal extension.

Every normal extension is a central extension.

Let $f \colon A \to B$ be a central extension. Then there is a
surjective homomorphism $p\colon E \rightarrow B$ such that the
first projection $\pi_1 \colon E {\times}_A B \rightarrow E$ in
the pullback \eqref{pullbackcentral} is a trivial extension. Then
$f \colon A \to B$ is an algebraically central extension with
abelian symmetric fibers by Lemma~\ref{algebraicpullback}, because
$\pi_1$ is the pullback of a morphism lying in $\Qndsa$.
\end{proof}

\begin{remark}
Note that there are surjective homomorphisms with abelian
symmetric fibers that are not algebraically central. Take for
instance the quandle $A$ given by the following table :

\[\begin{tabular}{c|cccc}

    $\lhd$&$a$&$b$&$c$&$d$\\
    \hline
    $a$&$a$&$c$&$b$&$a$\\

    $b$&$c$&$b$&$a$&$b$\\

    $c$&$b$&$a$&$c$&$c$\\

    $d$&$d$&$d$&$d$&$d$\\

\end{tabular}\] and the quandle homomorphism $f\colon A \to \{x,y\}$ defined by $f(a)=f(b)=f(c)=x$ and $f(d)=y$. Its kernel pair $\Eq(f)$ has $10$ elements, and thus can't be isomorphic to a product $A \times Q$ with $Q$ an abelian symmetric quandles since $A$ has $4$ elements.
\end{remark}

\begin{remark}
There are surjective algebraically central homomorphisms that do
not have symmetric fibers. Consider the additive group
$(\mathbb{Z}/2\mathbb{Z},+,\overline{0})$ and endow its underlying
set with the trivial quandle structure $a \lhd b = a$ for all $a,b
\in \mathbb{Z}/2\mathbb{Z}$. Remark that the group operation is a
quandle homomorphism:
\[(a \lhd b)+(a' \lhd b') = a+a' = (a+a') \lhd (b+b').\]
It follows that the Mal'tsev operation $p \colon
\mathbb{Z}/2\mathbb{Z} \times \mathbb{Z}/2\mathbb{Z} \times
\mathbb{Z}/2\mathbb{Z} \to \mathbb{Z}/2\mathbb{Z}$ defined by
$p(a,b,c) = a- b + c$ is a connector between the congruences
$\mathbb{Z}/2\mathbb{Z} \times \mathbb{Z}/2\mathbb{Z}$ and
$\mathbb{Z}/2\mathbb{Z} \times \mathbb{Z}/2\mathbb{Z}$. The
homomorphism $ \mathbb{Z}/2 \rightarrow 1$ is then an
algebraically central extension, whose (unique) fiber is not
symmetric, since $\overline{0} \lhd \overline{1} = \overline{0}
\neq \overline{1} = \overline{1} \lhd \overline{0}$.

\end{remark}

\section*{Acknowledgements}
This work was partially supported by the Centre for Mathematics of
the University of Coimbra -- UID/MAT/00324/2013, funded by the
Portuguese Government through FCT/MCTES and co-funded by the
European Regional Development Fund through the Partnership
Agreement PT2020, by the FCT grant number SFRH/BPD/69661/2010, and
by a FNRS grant \emph{Cr\'edit pour bref s\'ejour \`a
l'\'etranger} that has allowed the first author to visit the
Universidade de Coimbra for a research visit during which this
article was completed. The third author is a Postdoctoral
Researcher of the Fonds de la Recherche Scientifique-FNRS.

\end{document}